\pgfplotsset{compat=1.10}
\newtheorem{cor}{Corollary}
\newtheorem{definition}{Definition}
\newtheorem{lemma}{Lemma}
\newtheorem{prop}{Proposition}
\newtheorem{theorem}{Theorem}
\newtheorem{example}{Example}
\newtheorem{conjecture}{Conjecture}
\newcommand{\Av}{\operatorname{Av}}
\def\ps@pprintTitle{%
 \let\@oddhead\@empty
 \let\@evenhead\@empty
 \def\@oddfoot{}%
 \let\@evenfoot\@oddfoot}
 \def\keywords{\xdef\@thefnmark{}\@footnotetext}
\title{Two-stack-sorting with pop stacks}
\author[1]{Lara Pudwell}
\author[2]{Rebecca Smith}
\affil[1]{Department of Mathematics and Statistics, Valparaiso University, Valparaiso, IN, USA}
\affil[2]{Department of Mathematics, SUNY Brockport, Brockport, NY, USA}
\begin{document}

\maketitle

\begin{abstract}
We consider the set of permutations that are sorted after two passes through a pop stack.  We characterize these permutations in terms of forbidden patterns (classical and barred) and enumerate them according to the ascent statistic.  Then we show these permutations to be in bijection with a special family of polyominoes.  As a consequence, the permutations sortable by this machine are shown to have the same enumeration as three classical permutation classes.
\end{abstract}

\keywords{2010 \emph{Mathematics Subject Classification.} Primary 05A05; Secondary 05A15, 05A19, 05B50 }%
    \keywords{\emph{Key words and phrases.} pop stack, permutation,  polyomino, permutation pattern }%

%\begin{keyword}
%pop stack \sep permutation \sep polyomino \sep permutation pattern 
%% keywords here, in the form: keyword \sep keyword

%% PACS codes here, in the form: \PACS code \sep code

%% MSC codes here, in the form: \MSC code \sep code
%% or \MSC[2008] code \sep code (2000 is the default)

%\end{keyword}

\section{Introduction}\label{Introduction}

In this paper we study the permutations that are sortable by two passes through a pop stack.  We first introduce necessary definitions and notation and give a survey of related results.  In Section~\ref{S:patterns} we characterize the two-pop-stack sortable permutations, and in Section~\ref{S:enum} we enumerate the permutations according to the number of ascents.  The enumeration shows that the number of such permutations follows a linear recurrence with constant coefficients, so we give a second enumeration argument that reflects this recursive structure.  In Section~\ref{S:poly} we show these permutations to be in bijection with a special family of polyominoes.

We also note that pop stacks can be used to model genome rearrangements, as the most common rearrangement on genomes is reversal.  Rather than the traditional greedy model which reverses only one decreasing sequence at a time, a pop stack reverses all maximal decreasing subsequences of the permutation at each stage.  Our particular algorithm for networking pop stacks makes this greedier algorithm apply at each pass.  While certainly not an optimal algorithm, the structure involved with pop stacks makes it easier to handle multiple reversals at once.  

\subsection{Permutations}

Let $\mathcal{S}_n$ be the set of permutations of $[n]=\{1,2,\dots,n\}$.  Given $\pi \in \mathcal{S}_n$ and $\rho \in \mathcal{S}_k$ we say that $\pi$ \emph{contains} $\rho$ as a pattern if there exist $1 \leq i_1 < \cdots < i_k \leq n$ such that $\pi_{i_a} < \pi_{i_b}$ if and only if $\rho_a < \rho_b$; in this case we say that $\pi_{i_1}\pi_{i_2}\cdots\pi_{i_k}$ is \emph{order-isomorphic} to $\rho$.  Otherwise, $\pi$ \emph{avoids} $\rho$.  Alternatively, let the \emph{reduction} of the word $w$, denoted $\operatorname{red}(w)$, be the word formed by replacing the $i$th smallest letter of $w$ with $i$.  Then $\pi$ contains $\rho$ if there is a subsequence of $\pi$ whose reduction is $\rho$.

\begin{example}  The permutation $\pi=35841726$ contains the permutation $\rho =3241$ since the reduction of the subsequence $5472$ is $\operatorname{red}(5472)=3241$.
\end{example}
Our results also require a second kind of permutation pattern.  

\begin{definition}  A \emph{barred pattern} is a permutation $\rho \in \mathcal{S}_k$ where any entry may have a bar over it.   A permutation $\pi$ is said to \emph{contain} $\rho$ if $\pi$ contains a permutation pattern made up of the non-barred entries of $\rho$ that does not extend to a permutation pattern including all entries of $\rho$.
\end{definition}
As such, permutation $\pi$ \emph{avoids} a barred pattern $\rho$ if each copy of the pattern consisting of non-barred entries of $\rho$ in $\pi$ extends to a copy of the permutation made up of all entries of $\rho$.

\begin{example}  The permutation $\pi=35841726$ avoids the permutation $\rho =3\overline{5}241$ since the only occurrence of $3241$ (realized by $5472$)  is part of an occurrence of the pattern $35241$ (realized by $58472$).
\end{example}

Given two permutations $\alpha \in \mathcal{S}_j$ and $\beta \in \mathcal{S}_{\ell}$, the \emph{direct sum}, denoted $\alpha \oplus \beta$ is the permutation resulting from the concatenation of $\alpha$ with 
$\beta$ where all digits of $\beta$ are incremented by $j$.  For example, $321 \oplus 1 \oplus 21 \oplus 321 = 321465987$.

An \emph{ascent} of permutation $\pi$ is an index $i$ where $\pi_i < \pi_{i+1}$, while a \emph{descent} is an index $i$ where $\pi_i > \pi_{i+1}$.  We denote the number of ascents of $\pi$ by $\mathrm{asc}(\pi)$ and the number of descents by $\mathrm{des}(\pi)$.

\begin{definition}~\label{graph_def}  The \emph{graph} of a permutation $\pi \in \mathcal{S}_n$ is the set of points $$\left\{(i,\pi_i) \middle| 1 \leq i \leq n\right\}.$$
\end{definition}
For example, the graph of 321465987 is given in Figure~\ref{F:graph}.

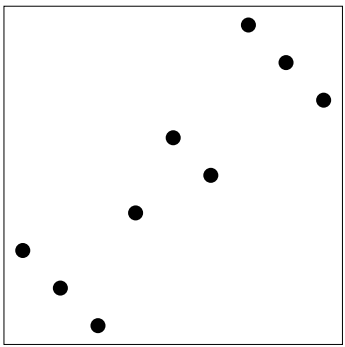
\begin{figure}
\begin{center}
\scalebox{0.5}{\begin{tikzpicture}
					\draw (0,0) rectangle (9,9);		
					\fill[black] (.5,2.5) circle (.2cm);
					\fill[black] (1.5,1.5) circle (.2cm);
					\fill[black] (2.5,0.5) circle (.2cm);
					\fill[black] (3.5,3.5) circle (.2cm);
					\fill[black] (4.5,5.5) circle (.2cm);
					\fill[black] (5.5,4.5) circle (.2cm);
					\fill[black] (6.5,8.5) circle (.2cm);
					\fill[black] (7.5,7.5) circle (.2cm);
					\fill[black] (8.5,6.5) circle (.2cm);
\end{tikzpicture}}
\end{center}
\caption{The graph of $\pi=321465987$}
\label{F:graph}
\end{figure}

\subsection{Sorting Networks}

A stack is a last-in first-out data structure with push and pop operations.  Knuth \cite{K73} studied permutations that are sortable by one pass through a stack; in other words, there is a sequence of push and pop operations to transform the permutation $\pi \in \mathcal{S}_n$ into the increasing permutation $1\cdots n$ as output.  Knuth showed that a permutation is sortable by one pass through a stack if and only if $\pi$ avoids the pattern 231.  There are $C_n$ such permutations of length $n$, where $C_n=\dfrac{\binom{2n}{n}}{n+1}$ is the $n$th Catalan number.  Other researchers have studied networks with multiple stacks in series or in parallel, including \cite{EI71, P73, T72}.

Let $S(\pi)$ be the output from passing $\pi$ through a single stack.  Knuth's result shows that $S(\pi)=12\cdots n$ if and only if $\pi$ avoids 231.  If we keep the convention that the stack must be increasing from top to bottom, then $S(\pi)$ is well-defined.  We push a new element onto the stack when the stack is empty or when the next available input is smaller than the top element of the stack.  We pop an element to output when the top element of the stack is smaller than the next available input or when the input is empty.  With this convention, $S(1)=1$ and for $n > 1$, $S(\pi_1\cdots \pi_{i-1} n \pi_{i+1}\cdots \pi_n) = S(\pi_1\cdots \pi_{i-1})S(\pi_{i+1}\cdots \pi_n)n$.  West \cite{W90} defined two-stack-sortable permutations as those for which $S(S(\pi))=12\cdots n$.  He showed that a permutation is sortable by two passes through a stack if and only if $\pi$ avoids 2341 and $3\overline{5}241$, and  Zeilberger \cite{Z92} showed that there are $\dfrac{2(3n)!}{(n + 1)!(2n + 1)!}$ such permutations of length $n$.  

Notice that West's definition is not the most efficient sorting algorithm since it does not look ahead to use the second pass through the stack strategically.  However, in addition to requiring only linear time to implement, this approach also never creates new \emph{inversions} along the way.  That is, if entries $\pi_i$ and $\pi_j$ are in the correct (i.e., increasing) relative order at some stage in the stack sorting process, they will remain that way in all future iterations.  We also note that this sorting algorithm is distinct from sorting with stacks in parallel or in series.  

In this paper, we consider the analogous characterization and enumeration results for pop stacks.  A pop stack is a stack where the only way to move an element from the stack to the output is to pop everything in the stack (in last-in first-out order).  That is, a pop stack can reverse the maximal contiguous decreasing subsequences of a permutation.  Both Avis and Newborn \cite{AN81} and Atkinson and Stitt \cite{AS02} studied pop stacks in series.  Atkinson and Sack \cite{AS99} and Smith and Vatter \cite{SV09} also considered pop stacks in parallel.  It follows from the work of Avis and Newborn that a permutation $\pi$ is sortable by one pass through a pop stack if and only if $\pi$ avoids 231 and 312.  There are $2^{n-1}$ such permutations. Permutations that avoid 231 and 312 are known as \emph{layered} permutations since they are the direct sum of decreasing permutations.  Further, layered permutations of length $n$ are in bijection with compositions (that is, ordered integer partitions) of $n$ since these permutations are uniquely determined by the lengths of the layers.  

\begin{example}  The permutation $321465987$, whose graph is shown in Figure~\ref{F:graph}, is a layered permutation with layers of size $3, 1, 2,$ and $3$, so it corresponds to the composition $3+1+2+3$.
\end{example}

\section{Two Pop Stacks}\label{S:2PS}

Our main concern is permutations which are sortable by two passes through a pop stack.  Let $P(\pi)$ be the output from running $\pi$ through a single pop stack. Keeping the convention of West, if the stack is increasing from top to bottom, then $P(\pi)$ is well-defined.  Let $\pi_1\cdots \pi_i$ be the longest decreasing prefix of $\pi \in \mathcal{S}_n$.  Then $P(1)=1$ and for $n>1$, $P(\pi) = \pi_i \cdots \pi_1 P(\pi_{i+1}\cdots \pi_n)$.  If $P(P(\pi))=12\cdots n$, we say that $\pi$ is two-pop-stack sortable and write $\pi \in \mathcal{P}_{2,n}$.  Further, we let $\mathcal{P}_2 = \bigcup_{n \geq 0} \mathcal{P}_{2,n}$.  We characterize and enumerate the permutations in $\mathcal{P}_{2,n}$ below.  Both results rely on the following definition and lemma.

A \emph{block} of a permutation is a maximal contiguous decreasing subsequence.  For example if $\pi=21534$, there are three blocks: $B_1=21$, $B_2=53$, and $B_3=4$.  Conceptually, a block is a set of letters that get output at the same time when we run $\pi$ through a pop stack.  Blocks characterize $\mathcal{P}_{2}$ in the following way:

\begin{lemma}\label{L:blocks}
Let $\pi$ be a permutation with blocks $B_1, \dots, B_\ell$.  Then, $\pi$ is two-pop-stack sortable if and only if for $1 \leq i \leq \ell-1$, $\max(B_i) \leq \min(B_{i+1})+1$.
\end{lemma}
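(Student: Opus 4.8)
The plan is to reduce the two-pass condition to a one-pass condition on the intermediate permutation $P(\pi)$, and then read off the block inequality. Since $\pi$ is two-pop-stack sortable exactly when $P(P(\pi)) = 12\cdots n$, and since a single pop stack sorts a permutation if and only if it avoids $231$ and $312$ (equivalently, is layered), by the result of Avis and Newborn, I would first record the equivalence
\[
\pi \in \mathcal{P}_2 \iff P(\pi) \text{ is layered.}
\]
All of the remaining work then goes into translating layeredness of $P(\pi)$ into a statement about the blocks of $\pi$.

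Next I would describe $P(\pi)$ explicitly. Writing $B_i^R$ for the reversal of the block $B_i$, a single pass reverses each maximal decreasing run, so $P(\pi) = B_1^R B_2^R \cdots B_\ell^R$, where each $B_i^R$ is increasing and runs from $m_i := \min(B_i)$ up to $M_i := \max(B_i)$. Consequently $P(\pi)$ has no descent inside any $B_i^R$, and the only possible descents occur at block boundaries: there is a descent between $B_i^R$ and $B_{i+1}^R$ precisely when $M_i > m_{i+1}$, and when it occurs the two adjacent entries are exactly $M_i$ followed by $m_{i+1}$.

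The crux is the following characterization, which I would isolate as a small lemma: a permutation is layered if and only if each of its maximal decreasing runs is an interval of consecutive integers. The forward direction is immediate from the definition of the direct sum; for the reverse, two consecutive decreasing runs that are integer intervals must appear in increasing order of value, since an ascent separates them and they are disjoint, so the permutation is a direct sum of decreasing runs. Applying this to $Q = P(\pi)$, every descent of $Q$ is a pair $M_i > m_{i+1}$, so every maximal decreasing run of $Q$ consists of consecutive integers if and only if $M_i = m_{i+1}+1$ at each descent boundary, i.e.\ if and only if $M_i \le m_{i+1}+1$ for all $i$ (boundaries with $M_i \le m_{i+1}$ contribute an ascent and impose no condition). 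This yields both directions at once: if $\max(B_i) \le \min(B_{i+1})+1$ for all $i$, then every run of $Q$ is a consecutive-integer interval and $Q$ is layered; conversely, if $\max(B_i) \ge \min(B_{i+1})+2$ for some $i$, then the run of $Q$ containing the descent $M_i, m_{i+1}$ has two adjacent entries differing by at least $2$, hence is not an interval, so $Q$ is not layered.

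I expect the main obstacle to be the careful bookkeeping of the second pass: one must verify that the descents of $Q = P(\pi)$ occur only at the block boundaries of $\pi$ and that the relevant adjacent entries are precisely $\max(B_i)$ and $\min(B_{i+1})$, since it is exactly this identification that converts the abstract layeredness condition into the clean inequality $\max(B_i) \le \min(B_{i+1})+1$. The supporting lemma that layered permutations are exactly those whose decreasing runs are consecutive-integer intervals is the other piece needing care, in particular the observation that disjoint integer-interval runs separated by an ascent are automatically in increasing order of value.
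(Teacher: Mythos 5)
Your proof is correct and rests on the same core reduction as the paper's: by Avis--Newborn, $\pi \in \mathcal{P}_2$ if and only if $P(\pi)$ is layered, and since the first pass reverses each block, $\max(B_i)$ and $\min(B_{i+1})$ become adjacent letters in $P(\pi)$. The genuine difference is completeness. The paper's proof environment establishes only the forward direction --- if $\pi$ is sortable then adjacent letters of the layered permutation $P(\pi)$ form either an ascent or a consecutive-value descent, giving the inequality --- and it relegates the converse to an informal discussion of an example (Figure~\ref{F:onesort}) after the proof. Your isolated lemma, that a permutation is layered if and only if each of its maximal decreasing runs is an interval of consecutive integers, is exactly what makes that converse rigorous: since every descent of $P(\pi)$ occurs at a block boundary with entries $\max(B_i)$ and $\min(B_{i+1})$, the inequality at every boundary forces every decreasing run of $P(\pi)$ to be a consecutive-integer interval, hence $P(\pi)$ is layered. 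This also quietly handles the one point the paper never addresses, namely that a decreasing run of $P(\pi)$ can chain across several size-one blocks, since being an interval is still equivalent to each adjacent (descent) pair differing by exactly $1$. In short: same route, but your write-up proves both directions where the paper's formal proof covers only one.
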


\begin{proof}
Suppose $\pi$ has blocks $B_1, \dots, B_\ell$.  By definition, each block consists of a decreasing sequence of elements and so $\max(B_i)$ is the first element in block $i$ while $\min(B_{i+1})$ is the last element in block $i+1$.  By definition of $P(\pi)$, $\max(B_i)$ and $\min(B_{i+1})$ are adjacent letters in $P(\pi)$.

If $\pi \in \mathcal{P}_2$, then $P(\pi)$ is layered.  This means that adjacent elements in $P(\pi)$ have one of two relationships.  Either they form an ascent (in which case $\max(B_i)<\min(B_{i+1})$) or they form a descent.  If two letters form a descent in a layered permutation, they must have consecutive values, that is $\max(B_i) = \min(B_{i+1})+1$.
\end{proof}

Figure~\ref{F:onesort} gives an illustration of Lemma~\ref{L:blocks} using the graph (as defined in Definition~\ref{graph_def}) of the permutation $\pi=215364$ before and after one run through a pop stack.  Blocks 1 and 2 show the behavior where $\max(B_1)<\min(B_{2})$ while blocks 2 and 3 have $\max(B_2) = \min(B_{3})+1$. In either event, applying the pop stack algorithm causes the consecutive blocks to form layered subpermutations.  Hence $P(\pi)$ is a layered permutation. 

\begin{figure}
\begin{center}
\begin{tabular}{cc}
\scalebox{0.5}{\begin{tikzpicture}
\draw (0,0)--(6,0)--(6,6)--(0,6)--(0,0);
\draw (2,0)--(2,6);
\draw (4,0)--(4,6);
\fill[black] (.5,1.5) circle (.2cm);
\fill[black] (1.5,0.5) circle (.2cm);
\fill[black] (2.5,4.5) circle (.2cm);
\fill[black] (3.5,2.5) circle (.2cm);
\fill[black] (4.5,5.5) circle (.2cm);
\fill[black] (5.5,3.5) circle (.2cm);
\node at (1,6.6) {\Huge$B_1$};
\node at (3,6.6) {\Huge$B_2$};
\node at (5,6.6) {\Huge$B_3$};
\end{tikzpicture}}
&
\scalebox{0.5}{\begin{tikzpicture}
\draw (0,0)--(6,0)--(6,6)--(0,6)--(0,0);
\draw (2,0)--(2,6);
\draw (4,0)--(4,6);
\fill[black] (.5,0.5) circle (.2cm);
\fill[black] (1.5,1.5) circle (.2cm);
\fill[black] (2.5,2.5) circle (.2cm);
\fill[black] (3.5,4.5) circle (.2cm);
\fill[black] (4.5,3.5) circle (.2cm);
\fill[black] (5.5,5.5) circle (.2cm);
\node at (1,6.6) {\Huge $B^{\text{rev}}_1$};
\node at (3,6.6) {\Huge $B^{\text{rev}}_2$};
\node at (5,6.6) {\Huge $B^{\text{rev}}_3$};
\end{tikzpicture}}\\
$\pi$&$P(\pi)$\\
\end{tabular}
\end{center}

\caption{The permutation 215364 before and after one pass through a pop stack}
\label{F:onesort}
\end{figure}
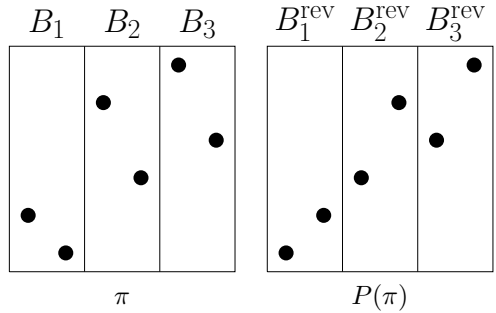

We also introduce divided notation for permutations.  This was used specifically in reference to a pop stack by the second author and Vatter~\cite{SV14} where divisions represented by vertical lines indicate when entries are to be popped at intermediate stages while attempting to sort the permutation $\pi$ by a pop stack.  The algorithm used here forces a division between $\pi_i$ and $\pi_{i+1}$ exactly when $\pi_i$ and $\pi_{i+1}$ form an ascent, that is, at the end of each block.  

At each division, the entries are reversed by the first run through the pop stack.  Thus a two-pop-stack sortable permutation $\pi$ must avoid the divided permutations $2 | 3 | 1, 2 | 13, 32 | 1,132$ as these are precisely the permutations that cause $P(\pi)$ to contain a $231$ pattern.  Similarly, $\pi$ must avoid  $3 | 1 | 2, 3 | 21,  13 | 2, 213$ as these are exactly the permutations that cause $P(\pi)$ to contain a $312$ pattern.   As there must be a division between any two entries forming an ascent, we obtain the following lemma. 

\begin{lemma}~\label{L:divide}
A permutation $\pi$ is two-pop-stack sortable if and only if  $\pi$ avoids
$2 | 3 | 1,  32 | 1,  3 | 1 | 2, 3 | 21$ when $\pi$ is written with divisions between all ascents.
\end{lemma}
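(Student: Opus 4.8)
The plan is to reduce Lemma~\ref{L:divide} to the already-established Lemma~\ref{L:blocks} by showing that the two characterizations agree. First I would recall the setup: after one pass through the pop stack, each block $B_i$ is reversed, and $\pi$ is sortable exactly when $P(\pi)$ is layered, i.e.\ avoids $231$ and $312$. The discussion preceding the lemma already identifies eight divided patterns that must be avoided: four that create a $231$ in $P(\pi)$ (namely $2\,|\,3\,|\,1$, $2\,|\,13$, $32\,|\,1$, $132$) and four that create a $312$ (namely $3\,|\,1\,|\,2$, $3\,|\,21$, $13\,|\,2$, $213$). The content of the lemma is that this list of eight can be pruned to just four: $2\,|\,3\,|\,1$, $32\,|\,1$, $3\,|\,1\,|\,2$, and $3\,|\,21$.

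The key step is to explain why the other four divided patterns are redundant given the division convention. The convention forces a vertical bar between every ascent. So consider a pattern like $2\,|\,13$: here $1$ and $3$ form an ascent (values increasing), which by the convention \emph{must} carry a division between them, so the genuine divided form is $2\,|\,1\,|\,3$, not $2\,|\,13$. Once that forced division is inserted, the three entries lie in three separate blocks, and the relevant obstruction is already captured by $2\,|\,3\,|\,1$ after accounting for the relative order; more directly, $2\,|\,1\,|\,3$ reduces to a configuration subsumed by the retained patterns. The same reasoning eliminates $132$, $13\,|\,2$, and $213$: in each, there is an ascent among the three entries that the convention must separate with a bar, so these ``illegal'' placements of divisions never actually occur, or collapse onto one of the four retained patterns. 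Thus I would go through each of the four discarded patterns and show that, once all ascents are divided, it either cannot arise or coincides with a retained pattern.

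Concretely, I would argue each direction as follows. For the forward direction, if $\pi$ is two-pop-stack sortable then by the preceding discussion $\pi$ avoids all eight divided patterns, so in particular it avoids the four retained ones. For the converse, I would show that avoiding the four retained patterns (with the forced-division convention in place) already forces avoidance of the remaining four, hence of all eight, hence $P(\pi)$ avoids $231$ and $312$ and is layered, so $\pi$ is sortable. The cleanest way to organize the converse is to observe that, because every ascent is divided, any occurrence of a discarded pattern would have its internal ascent replaced by a bar, turning it into one of the retained patterns; I would make this substitution explicit for each of $2\,|\,13 \leftrightarrow 2\,|\,1\,|\,3$, $132$, $13\,|\,2$, and $213$.

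The main obstacle I anticipate is handling the division convention carefully and without circularity: the patterns are \emph{divided} permutations, and their meaning depends on where bars may legally sit relative to ascents. The subtle point is that a divided pattern is forbidden only when its bar placement is consistent with the ``bar between every ascent'' rule; patterns whose implied bar-structure contradicts that rule are automatically avoided and so can be dropped from the list. I would therefore state the division convention precisely at the outset, then verify for each of the four eliminated patterns that its non-barred ascents force additional bars that either make the pattern impossible or collapse it onto $2\,|\,3\,|\,1$, $32\,|\,1$, $3\,|\,1\,|\,2$, or $3\,|\,21$. This case analysis is short but must be done without sign errors in the relative orders.
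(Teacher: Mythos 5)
Your proposal is correct and takes essentially the same route as the paper: the paper's own (implicit) proof consists exactly of the preceding discussion's eight divided patterns together with the observation that the four patterns containing an ascent inside an undivided segment ($2\,|\,13$, $132$, $13\,|\,2$, $213$) can never be realized once a division is forced at every ascent, since entries within a single block are decreasing, which leaves the four retained patterns. One small caution: of the two alternatives in your hedge, only ``never actually occur'' carries the argument --- the dropped patterns do not collapse onto retained ones (e.g.\ $2\,|\,1\,|\,3$ is not forbidden at all, as it produces only a harmless $213$ in $P(\pi)$), so in your case check you should verify unrealizability rather than look for a subsuming retained pattern.
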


\subsection{Characterization}\label{S:patterns}

In Lemma~\ref{L:blocks} and Lemma~\ref{L:divide}, we characterized two-pop-stack sortable permutations in terms of blocks and divided permutations.  Here, we characterize these permutations in the more conventional language of pattern avoidance.  Lemma~\ref{L:blocks} and Lemma~\ref{L:divide} are equivalent to the following theorem.

\begin{theorem}\label{T:patterns}
Permutation $\pi$ is two-pop-stack sortable if and only if $\pi$ avoids the patterns $2341, 3412, 3421,
4123, 4231, 4312, 4\overline{1}352$, and $413\overline{5}2$.
\end{theorem}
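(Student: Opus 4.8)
The plan is to derive the pattern characterization directly from Lemma~\ref{L:divide}, translating each forbidden divided permutation into the classical and barred patterns it corresponds to once the division structure is accounted for. The key observation is that a division is forced precisely between two entries forming an ascent, so a divided forbidden pattern such as $32|1$ encodes a classical pattern $321$ in which the \emph{local} descent/ascent structure is pinned down: the first two entries must be consecutive in $\pi$ (or at least lie in the same block), while the divided third entry must begin a new block. I would begin by showing the ``only if'' direction, namely that every $\pi \in \mathcal{P}_2$ avoids the eight listed patterns; for this I would take each pattern in turn and exhibit how a containment would force one of the four forbidden divided configurations of Lemma~\ref{L:divide} to appear, contradicting sortability.

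For the classical patterns $2341, 3412, 3421, 4123, 4231, 4312$, the argument is a finite case analysis on where the block divisions can fall within an occurrence of the pattern. The idea is that in any length-four pattern, the positions of the ascents are determined by the pattern itself, and an ascent forces a division; one then checks that no matter how the remaining (descent) adjacencies are grouped into blocks, at least one of $2|3|1$, $32|1$, $3|1|2$, or $3|21$ is realized as a sub-configuration. For instance, $2341$ has ascents in its first three positions, forcing divisions $2|3|4|1$, and the trailing $4|1$ together with a preceding entry yields a $3|21$- or $32|1$-type obstruction. I would organize this as a short table pairing each forbidden classical pattern with the divided pattern it produces.

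The genuinely delicate part is the two barred patterns $4\overline{1}352$ and $413\overline{5}2$, and I expect this to be the main obstacle. Here avoidance of the barred pattern is strictly weaker than avoidance of its underlying classical pattern: a permutation may contain the unbarred skeleton ($4352$ for the first, $4132$ for the second) yet still be sortable, provided every such occurrence extends to include the barred entry in its prescribed relative position. The reason a bar is needed rather than a plain classical pattern is exactly the block structure governed by Lemma~\ref{L:blocks}: an occurrence of the skeleton causes a forbidden configuration only when there is \emph{no} intervening entry (the barred one) gluing the troublesome entries into a single admissible block. I would argue that a bad occurrence of $4352$ produces, via the divisions, a $2|3|1$ or $32|1$ pattern in $P(\pi)$ unless the existence of an appropriately placed ``$1$'' (in value between the relevant entries) forces those entries into the same block, and symmetrically analyze $413\overline{5}2$ against the $312$-type obstructions. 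Verifying that the bar is placed at precisely the value and position needed—so that its presence is exactly the condition rescuing sortability—is where the careful bookkeeping lies.

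Finally, for the ``if'' direction I would show that avoidance of the eight patterns implies the block condition $\max(B_i) \le \min(B_{i+1}) + 1$ of Lemma~\ref{L:blocks}. The plan is contrapositive: if some pair of consecutive blocks violates the inequality, so that $\max(B_i) \ge \min(B_{i+1}) + 2$, then there are at least two values strictly between $\min(B_{i+1})$ and $\max(B_i)$, and I would locate witnesses among the blocks that assemble into one of the eight patterns, with the barred patterns covering exactly the cases where an intermediate entry would otherwise collapse a four-point obstruction into a shorter sortable configuration. Matching the barred-pattern occurrences to the block-violation witnesses should close the equivalence; I anticipate the bulk of the write-up being the bijective matching between violated-inequality scenarios and pattern occurrences, rather than any single hard inequality.
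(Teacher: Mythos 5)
Your plan is structurally the same as the paper's proof: one direction translates a containment of each listed pattern into the forbidden divided configurations of Lemma~\ref{L:divide}, checking that additional divisions cannot rescue the permutation, and the other direction argues contrapositively from Lemma~\ref{L:blocks}, taking adjacent blocks with $\max(B_i)\ge\min(B_{i+1})+2$, locating an intermediate value $c$, and doing a case analysis on where $c$ sits, with the two barred patterns arising precisely because a block-violation witness can never be accompanied by the extending entry. So the architecture matches the paper exactly; the issues are in the details you actually commit to.

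Three of those details are wrong as written. First, $2341$ has ascents only in its first two positions ($4>1$ is a descent), so the forced divisions are $2|3|41$, not $2|3|4|1$, and the obstruction it yields is $2|3|1$ (on the entries playing $2$, $3$, $1$), not a ``$3|21$- or $32|1$-type'' configuration; in fact when the pattern entries lie in pairwise distinct blocks no $32|1$ or $3|21$ can occur at all, since those require two of the chosen entries to share a block. Since this is the only case you carry out, the table you propose would need to be built from scratch and more carefully. Second, $\max(B_i)\ge\min(B_{i+1})+2$ guarantees at least \emph{one} value strictly between $b=\min(B_{i+1})$ and $a=\max(B_i)$, not two; the whole case analysis runs on a single witness $c$, and nothing more is available. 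Third, the barred entry of $4\overline{1}352$ has value \emph{below} every entry of the skeleton $4352$ and sits positionally between the `4' and the `3'; it is not ``in value between the relevant entries.'' The reason the classical pattern $3241$ must be weakened to this barred pattern is that in an occurrence built from adjacent blocks $B_i$, $B_{i+1}$, any entry between the letters playing `4' and `3' lies inside the decreasing block $B_i$ and hence is \emph{larger} than the letter playing `3', so no such small entry can exist there; the symmetric statement with an entry above everything handles $413\overline{5}2$. All of this is repairable without changing your outline, but as submitted the worked case fails and the mechanism of the barred patterns is misdescribed.
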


\begin{proof}
Suppose that $\pi$ is not two-pop-stack sortable.  By Lemma~\ref{L:blocks}, there exist two adjacent blocks of $\pi$, $B_i$ and $B_{i+1}$, such that $\max(B_i) \geq \min(B_{i+1})+2$.  Let $a=\max(B_i)$ and $b=\min(B_{i+1})$.  Clearly $a>b$.  Further since $a$ and $b$ are in different blocks, there must be one ascent between them; that is, $a$ and $b$ are the first and last letters of either a 231 pattern, a 312 pattern, or a 4231 pattern.

If $a$ and $b$ are the first and last letters in a 231 pattern and $a \geq b+2$, there must be another digit $c$ such that $a>c>b$.  If $c$ appears before $a$, then $c$ together with the 231 pattern forms a 2341 pattern.  If $c$ appears in block $B_i$, then $c$ together with the 231 pattern forms a 3241 pattern.  If $c$ appears in block $B_{i+1}$, then $c$ together with the 231 pattern forms a 3421 pattern.  If $c$ appears after $b$, then $c$ together with the 231 pattern forms a 3412 pattern.

If $a$ and $b$ are the first and last letters in a 312 pattern and $a \geq b+2$, there must be another digit $c$ such that $a>c>b$.  If $c$ appears before $a$, then $c$ together with the 312 pattern forms a 3412 pattern.  If $c$ appears in block $B_i$, then $c$ together with the 312 pattern forms a 4312 pattern.  If $c$ appears in block $B_{i+1}$, then $c$ together with the 312 pattern forms a 4132 pattern.  If $c$ appears after $b$, then $c$ together with the 312 pattern forms a 4123 pattern.

If $a$ and $b$ are the first and last digits in a 4231 pattern, then $a \geq b+2$ already.

Therefore, if there exist two adjacent blocks of $\pi$, $B_i$ and $B_{i+1}$, such that $\max(B_i) \geq \min(B_{i+1})+2$, then $\pi$ contains at least one of 2341, 3241, 3412, 3421, 4123, 4132, 4231, or 4312 as a pattern.  However, the digits serving as $a$ and $b$ in are in adjacent blocks. If we have a 3241 pattern where $a$ plays the role of `3' and $b$ plays the role of `1', then there can be no letter less than $b$ that appears between `3' and `2' as there is only one ascent, namely from $B_i$ to $B_{i+1}$.  In other words, $\pi$ contains a copy of 4352 (i.e. 3241) that does not extend to a 41352 pattern; that is $\pi$ contains $4\overline{1}352$.  Similarly, if we have a 4132 pattern where $a$ plays the role of `4' and $b$ plays the role of `2', then there can be no letter greater than $a$ that appears between `3' and `2'.  In other words, $\pi$ contains a copy of 4132 that does not extend to a 41352 pattern; that is, $\pi$ contains $413\overline{5}2$.

Therefore, if $\pi$ is not two-pop-stack-sortable, $\pi$ contains at least one of the patterns 2341, 3412, 3421, 4123, 4231, 4312, $4\overline{1}352$, and $413\overline{5}2$.

To show the converse, recall Lemma~\ref{L:divide} characterizes two-pop-stack sortable permutations by the avoidance of
$2 | 3 | 1,  32 | 1,  3 | 1 | 2, 3 | 21$ when permutations are written with divisions between each descent.

The patterns 2341, 3412, 3421, 4123, 4231, 4312, $4\overline{1}352$, and $413\overline{5}2$, must have at least the following divisions:
\[
2 | 3 | 41, \; 3 | 41 | 2, \;3 | 421, \;41 | 2 | 3, \;42 | 31, \;431 | 2, \;4\overline{1}3 | 52, \; \text{and} \;41 | 3\overline{5}2\; \text{respectively}.
\]
Notice that no additional divisions can prevent $2 | 3 | 41$ from containing $2 | 3 | 1$ nor can they prevent $3 | 41 | 2,  41 | 2 | 3 $ from containing $3 | 1 | 2$.  The division needed to prevent $3 | 421$ or $42 | 31$ from containing a $3 | 21$ pattern forces the new containment of a $2 | 3 | 1$ pattern.  And the division needed to prevent $431 | 2$ from containing a $32 | 1$ forces the new containment of a $ 3 | 1 | 2$ pattern.

Finally consider the barred patterns.  Notice $4\overline{1}3 | 52$ with no additional divisions contains a $32 | 1$ pattern.  An additional division between the `4' and `3'  indicates an ascent occurred between these entries (possibly involving one of them).  An entry larger than the `4' yields a $3421$ subpattern and an entry less than the `3', but greater than the `2' yields a $4231$ subpattern.  Further, an entry with value between `3' and `4' does not cause an ascent to occur and creates an additional need for a division for the same reason as above.  Only an entry smaller than the `2' will allow for the divided permutation $41 | 3 | 52$ which is two-pop-stack sortable.  A similar argument shows permutations containing $41 | 3\overline{5}2$ are not two-pop-stack sortable.
\end{proof}

\subsection{Enumeration}\label{S:enum}

Next, we determine $\left|\mathcal{P}_{2,n}\right|$.   By definition, a permutation has an ascent at position $i$ exactly when $\pi_i$ and $\pi_{i+1}$ are in different blocks.  Therefore, the number of blocks of $\pi$ is one more than the number of ascents of $\pi$.  In light of Lemma~\ref{L:blocks} it is natural to consider two-pop-stack sortable permutations with a fixed number of ascents.

\begin{prop} Let $a(n,k) = \left|\left\{\pi \in \mathcal{P}_{2,n} \middle| \mathrm{asc}(\pi)=k \right\}\right|$ and let \\$b(n,k) = \left|\left\{\pi \in \mathcal{P}_{2,n} \middle| \mathrm{asc}(\pi)=k \text{ and the last block of } \pi \text{ has size 1}\right\}\right|$. \\For $n \geq 0$:
\begin{align*}
a(n,k)&=\begin{cases}
1& k=0 \text{ or } k=n-1 \\
0& k<0 \text{ or } k \geq n \geq 1\\
2 \displaystyle{\sum_{i=1}^{n-1} a(i,k-1)} - b(n-1,k-1)&\text{ otherwise}
\end{cases}\\
\text{and}\\
b(n,k)&=\begin{cases}
1& k=n-1\\
0& k<1 \text{ or } k \geq n\\
2 a(n-1,k-1) - b(n-1,k-1)&\text{ otherwise.}
\end{cases}
\end{align*}
\label{P:recurrence}
\end{prop}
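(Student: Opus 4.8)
\section*{Proof proposal for Proposition~\ref{P:recurrence}}

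The plan is to settle the boundary entries by hand and then prove both recurrences simultaneously through a ``remove the last block'' decomposition. For the boundary values I would argue directly from the ascent statistic. The only permutation with $\mathrm{asc}(\pi)=0$ is the decreasing permutation $n(n-1)\cdots 1$, which is a single block and hence two-pop-stack sortable, and the only one with $\mathrm{asc}(\pi)=n-1$ is the increasing permutation $12\cdots n$; both are sortable, so $a(n,0)=a(n,n-1)=1$, while $k<0$ or $k\ge n$ is impossible. For $b$, the increasing permutation has singleton last block, giving $b(n,n-1)=1$; and $b(n,k)=0$ when $k\ge n$ (impossible) and when $k\le 0<n-1$, since then the unique $k=0$ permutation $n\cdots 1$ has last block of size $n\ge 2$.

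For the recurrences, given a sortable $\pi$ of length $n$ with $k$ ascents I would delete its last block $B_{k+1}$, say of size $s\ge 1$. The first $n-s$ entries reduce to a permutation $\hat\sigma$ of length $i:=n-s$ with $k-1$ ascents, and I would first verify that $\hat\sigma$ is again two-pop-stack sortable: deleting a set of values and reducing can only decrease each difference $\max(B_t)-\min(B_{t+1})$, so the inequalities of Lemma~\ref{L:blocks} are inherited. Hence $\pi\mapsto(\hat\sigma,B_{k+1})$ is a bijection onto pairs consisting of a sortable $\hat\sigma$ of some length $i\in\{1,\dots,n-1\}$ with $k-1$ ascents together with an admissible appended last block, and both recurrences reduce to counting, for a fixed $\hat\sigma$, the admissible blocks of each size $s$.

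The heart of the argument is the claim that for each $\hat\sigma$ there are exactly two admissible appended blocks of any fixed size $s\ge 2$. By Lemma~\ref{L:blocks} the new block must satisfy $\max(B_k)\le\min(B_{k+1})+1$, so it either forms a clean ascent ($\max(B_k)<\min(B_{k+1})$) or interlocks ($\max(B_k)=\min(B_{k+1})+1$); the clean option is always realized by taking $B_{k+1}$ to be the top $s$ values, and I would show the interlocking option is realized in exactly one further way. The key structural input is that a last block of size $s\ge 2$ must contain the value $n$: otherwise $n$ lies in an earlier block, which by Lemma~\ref{L:blocks} forces every later block, including the last, to be a decreasing run of consecutive singletons $n-1,n-2,\dots$, contradicting $s\ge 2$. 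With $\max(B_{k+1})=n$ the ascent into the last block is automatic, so both the clean and the interlocking choices survive, giving exactly two. Thus the blocks of size $s\ge 2$ contribute $2\sum_{i=1}^{n-2}a(i,k-1)$, and combined with the size-one contribution computed next this yields $a(n,k)=2\sum_{i=1}^{n-1}a(i,k-1)-b(n-1,k-1)$, while restricting throughout to singleton last blocks yields $b(n,k)=2a(n-1,k-1)-b(n-1,k-1)$.

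Finally I would pin down the size-one count and the correction term $-b(n-1,k-1)$. When $s=1$ the appended block is a single value placed after $\hat\sigma$; the interlocking option forces this value to equal $\max(B_k)-1$, while the ascent into the singleton forces it to exceed the last entry of $\hat\sigma$. If the last block of $\hat\sigma$ has size $\ge 2$ these two demands are compatible and both options occur, but if that block is a singleton they collide and only the clean option survives, costing exactly one choice; summing this loss over the $b(n-1,k-1)$ relevant $\hat\sigma$ produces the $-b(n-1,k-1)$ term. I expect the main obstacle to be the careful verification that appending a block never destroys one of the inherited interlocking relations among $B_1,\dots,B_k$ and that the interlocking option is realized exactly once; this requires using the forced-singleton structure above to track precisely which values an admissible block may contain and checking that reduction and its inverse preserve the inequalities of Lemma~\ref{L:blocks}.
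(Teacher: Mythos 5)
Your proposal is correct and follows essentially the same route as the paper: both decompose a sortable permutation by its final block, use Lemma~\ref{L:blocks} to see that the appended block can interact with the preceding one in exactly two ways (a clean ascent with $\max(B_k)<\min(B_{k+1})$, or the interlocked case $\max(B_k)=\min(B_{k+1})+1$), and observe that the interlocked option is impossible precisely when both the appended block and the prefix's last block are singletons, which produces the correction term $-b(n-1,k-1)$ in both recurrences. Your write-up is in fact somewhat more detailed than the paper's own proof (e.g., the observation that a last block of size at least $2$ must contain $n$, and that deletion plus reduction preserves the inequalities of Lemma~\ref{L:blocks}); the uniqueness verification you flag as remaining work is simply asserted without proof in the paper.
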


\begin{proof}
For $a(n,k)$, we first note that a permutation of length $n$ must have at least zero ascents and no more than $n-1$ ascents.   There is one way to have no ascents (the decreasing permutation) and one way to have all $n-1$ possible ascents (the increasing permutation).

More generally, Lemma~\ref{L:blocks} shows that there are two ways for adjacent blocks to interact: 
\begin{center}
$\max(B_i)<\min(B_{i+1})$ or $\max(B_i) = \min(B_{i+1})+1$.
\end{center}
This first situation may occur no matter the sizes of blocks $B_i$ and $B_{i+1}$.  However, the second case may only happen if at least one of the blocks has size greater than 1; if both blocks have size 1 and $\max(B_i) = \min(B_{i+1})+1$ then $\max(B_i)$ and $\min(B_{i+1})$ form a descent and are actually in the same block.

Suppose that we wish to build a permutation of length $n$ with $k>0$ ascents.  Consider the permutation formed by the first $k$ blocks of the permutation, which has length $i$, ($1 \leq i \leq n-1$) and $k-1$ ascents.  There are two ways to add a new block of size $n-i$ and produce a two-pop-stack sortable permutation, with one exception: if $i=n-1$, and the permutation formed by the first $k$ blocks ends in a block of size 1, then there is a unique way to add a final block of size 1.  The $2 \sum_{i=1}^{n-1} a(i,k-1)$ term reflects the fact that there are generally two ways to add a final block to achieve a permutation of length $n$ with $k$ ascents.  The $b(n-1,k-1)$ term subtracts off the number of permutations for which there was only one way to add a final block to achieve a permutation of length $n$ with $k$ ascents.

The argument for $b(n,k)$ is similar.  Since permutations counted by $b(n,k)$ end in a block of size 1, the only way to have no ascents is for $n=1$ and $k=0$, which is covered in the $k=n-1$ case.  Then, as before a permutation of length $n$ cannot have less than zero ascents and can have no more than $n-1$ ascents.  There is still one way to have all $n-1$ possible ascents (the increasing permutation).

More generally, suppose that we wish to build a permutation of length $n$ with $k>0$ ascents and that ends in a block of size 1.  Consider the permutation formed by the first $k$ blocks of the permutation, which has length $n-1$.  There are two ways to add a new block of size 1 to produce a permutation of length $n$, unless the last block of the permutation on $n-1$ letters already ended in a block of size 1.
\end{proof}

Proposition~\ref{P:recurrence} implies following result:

\begin{theorem}
$$\sum_{\pi \in \mathcal{P}_{2}}x^{\left|\pi\right|}y^{\mathrm{asc}(\pi)} = \sum_{n=0}^\infty \sum_{k=0}^{n-1} a(n,k) x^ny^k= \dfrac{1-xy-x^2y+x^3y-2x^3y^2}{1-x-xy-x^2y-2x^3y^2}$$
\label{T:mgf}
\end{theorem}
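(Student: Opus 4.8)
The plan is to convert the bivariate recurrence of Proposition~\ref{P:recurrence} into a functional equation for the generating function and solve it. Define $A(x,y) = \sum_{n,k} a(n,k)x^ny^k$ and $B(x,y) = \sum_{n,k} b(n,k)x^ny^k$, so the claimed identity is precisely $A(x,y) = (1-xy-x^2y+x^3y-2x^3y^2)/(1-x-xy-x^2y-2x^3y^2)$. I would first isolate the ``generic'' recurrences, namely $a(n,k) = 2\sum_{i=1}^{n-1}a(i,k-1) - b(n-1,k-1)$ and $b(n,k) = 2a(n-1,k-1) - b(n-1,k-1)$, and then carefully account for the boundary cases ($k=0$, $k=n-1$, and the vanishing ranges) that are carved out in the piecewise definition. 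The presence of the inner sum $\sum_{i=1}^{n-1}a(i,k-1)$ suggests that multiplying the first recurrence by $x^ny^k$ and summing will produce the operator $\frac{xy}{1-x}A(x,y)$ acting on $A$, since $\sum_n x^n \sum_{i=1}^{n-1} a(i,k-1) = \frac{x}{1-x}\sum_i a(i,k-1)x^i$.

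Concretely, I would translate each recurrence into generating-function form term by term. The second recurrence is simpler and should give $B(x,y)$ in terms of $A(x,y)$ and $B(x,y)$: the term $2a(n-1,k-1)$ contributes $2xy\,A(x,y)$, the term $-b(n-1,k-1)$ contributes $-xy\,B(x,y)$, and the boundary cases contribute an explicit correction polynomial. Solving for $B$ yields $B(x,y) = \frac{2xy\,A(x,y) + (\text{correction})}{1+xy}$. I would then substitute this expression for $B$ into the generating-function form of the first recurrence, which reads schematically $A(x,y) = \frac{2xy}{1-x}A(x,y) - xy\,B(x,y) + (\text{correction})$, where I have used the inner-sum translation above and shifted indices in $-b(n-1,k-1)$ to get $-xy\,B(x,y)$. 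After substituting for $B$, this becomes a single linear equation in $A(x,y)$, which I solve to obtain a rational function; I would then simplify and match it against the stated closed form.

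The main obstacle will be bookkeeping the boundary terms correctly. The recurrences hold only ``otherwise,'' with the diagonal $k=n-1$ (the increasing permutation), the column $k=0$ (the decreasing permutation), and the vanishing regions handled separately, so naively applying the generic recurrence overcounts or undercounts along these edges. The cleanest way to manage this is to define $A$ and $B$ as sums over \emph{all} integer $n,k$ (with $a(n,k)=b(n,k)=0$ outside the stated support) and to verify that the generic recurrence, when applied to the full lattice, differs from the true values only on finitely many low-degree monomials; those discrepancies are precisely the correction polynomials, which I would compute directly from the small cases (e.g.\ $a(0,0)$, $a(1,0)$, $a(2,0)$, $a(2,1)$, and the analogous entries for $b$). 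Once the corrections are pinned down, the algebra is a routine elimination of $B$ and a cross-multiplication; I expect the denominator $1-x-xy-x^2y-2x^3y^2$ to emerge from clearing the factors $1-x$ and $1+xy$ introduced by the inner sum and by solving for $B$, and I would finish by confirming the first several Taylor coefficients of the resulting rational function against values of $a(n,k)$ computed directly from Proposition~\ref{P:recurrence}.
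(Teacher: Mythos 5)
Your strategy is the same as the paper's: translate Proposition~\ref{P:recurrence} into a linear system of functional equations for $A(x,y)$ and $B(x,y)$ and then eliminate $B$. The paper's system is
\[
A=\frac{1}{1-x}+\frac{2xy}{1-x}\left(A-1\right)-xyB,
\qquad
B=x+x^2y+\frac{2x^3y}{1-x}+2xy\left(A-\frac{1}{1-x}\right)-xy\left(B-x\right),
\]
and solving it (your elimination of $B$ is the same computation) does produce the stated numerator and denominator, so the skeleton of your argument matches the paper's proof.

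The genuine flaw is your claim that the generic recurrences, applied over the whole lattice, differ from the true values ``only on finitely many low-degree monomials.'' That is false for $a(n,k)$: the column $k=0$ has $a(n,0)=1$ for \emph{every} $n\geq 0$ (the decreasing permutation exists at every length), while the generic recurrence $2\sum_{i=1}^{n-1}a(i,-1)-b(n-1,-1)$ evaluates to $0$ there, so the correction to the $A$-equation is the infinite geometric series $\frac{1}{1-x}$, not a polynomial. (A related subtlety: the inner sum starts at $i=1$, so it translates as $\frac{xy}{1-x}\left(A-1\right)$ rather than $\frac{xy}{1-x}A$; the difference $\frac{2xy}{1-x}$ is again not finitely supported.) This is exactly why the paper's equations above carry rational, not polynomial, correction terms. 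If you executed your plan literally --- assume finite support and read the corrections off a handful of small cases --- you would write down a wrong functional equation for $A$ and fail to reach the stated closed form; your final Taylor-coefficient check would detect the failure but not repair it. For $B$, by contrast, the discrepancies genuinely are finitely supported: they sum to $x-2xy$, which is what the paper's scattered corrections $x+2x^2y+\frac{2x^3y}{1-x}-\frac{2xy}{1-x}$ collapse to, so your heuristic happens to be valid on that half of the system. The fix is easy --- treat the $k=0$ boundary (and the $i=1$ lower summation limit) as column-wise corrections, i.e., geometric series in $x$ --- after which your elimination and cross-multiplication go through exactly as you describe.
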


\begin{proof}
Consider $$A(x,y)=\sum_{n \geq 0} \sum_{k \geq 0} a(n,k) x^ny^k$$ and $$B(x,y)=\sum_{n \geq 0} \sum_{k \geq 0} b(n,k) x^ny^k.$$  From the recurrences in Proposition~\ref{P:recurrence} we obtain $$A(x,y)=\frac{1}{1-x}+\frac{2xy}{1-x}\left(A(x,y)-1\right)-xyB(x,y)$$ and $$B(x,y)=x+x^2y+\frac{2x^3y}{1-x}+2xy\left(A(x,y)-\frac{1}{1-x}\right)-xy\left(B(x,y)-x\right).$$  Solving this system for $A(x,y)$ yields the bivariate generating function in the theorem.
\end{proof}

Expanding the generating function in Theorem~\ref{T:mgf} to see low-order terms, we have:
\begin{align*}
\dfrac{1-xy-x^2y+x^3y-2x^3y^2}{1-x-xy-x^2y-2x^3y^2}=&1+x+(y+1)x^2+(y^2+4y+1)x^3\\
&+(y^3+8y^2+6y+1)x^4\\
&+(y^4+12y^3+20y^2+8y+1)x^5\\
&+(y^5+16y^4+48y^3+36y^2+10y+1)x^6\\
&+\cdots
\end{align*}

We will return to the sequences from Proposition~\ref{P:recurrence} in Section~\ref{S:more_rec} as well as utilize the ascent structure of these sortable permutations in Section~\ref{S:poly}.  First however, notice plugging in $y=1$ yields the enumeration of two-pop-stack sortable permutations.
 
\begin{cor}
$$\sum_{\pi \in \mathcal{P}_{2}}x^{\left|\pi\right|} = \dfrac{1-x-x^2-x^3}{1-2x-x^2-2x^3}$$
\label{C:enum}
\end{cor}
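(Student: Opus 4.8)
The plan is to derive the corollary directly from Theorem~\ref{T:mgf} by specializing to $y=1$. The key observation is that in the bivariate generating function the variable $y$ records only the ascent statistic, through the factor $y^{\mathrm{asc}(\pi)}$. Evaluating at $y=1$ replaces each such factor by $1$, so the left-hand side $\sum_{\pi \in \mathcal{P}_{2}}x^{|\pi|}y^{\mathrm{asc}(\pi)}$ collapses to $\sum_{\pi \in \mathcal{P}_{2}}x^{|\pi|}$, which is exactly the ordinary length generating function for $|\mathcal{P}_{2,n}|$ that the corollary asserts. This substitution is legitimate for the rational generating function at hand: its expansion is a formal power series with nonnegative integer coefficients, and the coefficient of $x^n$ is a polynomial in $y$, so setting $y=1$ is a well-defined operation term by term.

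The remaining task is an elementary simplification of the closed form. Setting $y=1$ in the numerator $1-xy-x^2y+x^3y-2x^3y^2$ gives $1-x-x^2+x^3-2x^3$, and combining the two cubic terms yields $1-x-x^2-x^3$. Setting $y=1$ in the denominator $1-x-xy-x^2y-2x^3y^2$ gives $1-x-x-x^2-2x^3$, where the two linear terms combine to $-2x$, yielding $1-2x-x^2-2x^3$. Hence the quotient becomes exactly the expression stated in the corollary.

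There is no genuine obstacle here: the argument is a single specialization followed by collection of like terms, so the only point requiring care is the bookkeeping of the cubic and linear coefficients under the substitution. As a consistency check, I would confirm that evaluating the series expansion displayed after Theorem~\ref{T:mgf} at $y=1$ produces the row sums $1, 1, 2, 6, 16, 42, 112, \ldots$, and that these agree with the coefficients produced by the claimed rational function via the recurrence $c_n = 2c_{n-1}+c_{n-2}+2c_{n-3}$ read off from its denominator.
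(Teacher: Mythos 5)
Your proof is correct and is essentially the paper's own argument: the corollary is obtained exactly by setting $y=1$ in the bivariate generating function of Theorem~\ref{T:mgf}, and your algebraic simplification of the numerator and denominator matches the stated rational function. The consistency check against the row sums $1,1,2,6,16,42,112,\ldots$ and the recurrence $c_n=2c_{n-1}+c_{n-2}+2c_{n-3}$ is a nice touch but not needed beyond the substitution itself.
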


This generating function corresponds to sequence A224232 in the On-Line Encyclopedia of Integer Sequences \cite{OEIS}.  From this rational generating function, we see that the number of two-pop-stack sortable permutations follows a linear recurrence with constant coefficients; that is,
\begin{equation}
\left|\mathcal{P}_{2,n}\right| = 2\left|\mathcal{P}_{2,n-1}\right|+\left|\mathcal{P}_{2,n-2}\right|+2\left|\mathcal{P}_{2,n-3}\right|.
\label{E:linrec}
\end{equation}

Equation~\ref{E:linrec} can also be found more simply, but without the refinement obtained by counting the number of ascents, by considering the placement of the $1$ in a two-pop-stack sortable permutation as shown below.

Let $I_n=1\cdots n$ be the increasing permutation of length $n$ and let $J_n = n\cdots 1$ be the decreasing permutation of length $n$.  Let $J_n^{(+k)} = (n+k)\cdots (1+k)$ be the decreasing permutation of length $n$ where all digits have been incremented by $k$.  Then, we can decompose the set $\mathcal{P}_{2,n}$ as described in Theorem~\ref{T:prefix}.

\begin{theorem}\label{T:prefix}
Suppose $\pi \in \mathcal{S}_n$ where $n \geq 4$.  Then $\pi \in \mathcal{P}_{2,n}$ if and only if one of the following is true:
\begin{enumerate}
\item $\pi = 1 \oplus \hat{\pi}$ where $\hat{\pi} \in \mathcal{P}_{2,n-1}$,
\item $\pi_i=1$ for some $i \geq 2$ where $\pi_1 \cdots \pi_{i-1}$ is the longest decreasing prefix of $\pi_1\cdots \pi_{i-1}\pi_{i+1}\cdots \pi_n$, and $\hat{\pi} = \operatorname{red}(\pi_1\cdots \pi_{i-1}\pi_{i+1}\cdots \pi_n) \in \mathcal{P}_{2,n-1}$,
\item $\pi_1\pi_2\pi_3=312$ and $\hat{\pi}=\operatorname{red}(\pi_{4}\cdots \pi_n) \in \mathcal{P}_{2,n-3}$.
\item $\pi_1\pi_2\pi_3=413$, $\pi_{4}\cdots \pi_n$ begins with a decreasing prefix of length at least 2 that ends in the digit 2, and  $\hat{\pi}=\operatorname{red}(\pi_{4}\cdots \pi_n) \in \mathcal{P}_{2,n-3}$,
\item $\pi=2\pi_21(\pi_2-1)\pi_5\cdots \pi_n$ where $\pi_5>\pi_2$ and $\operatorname{red}(\pi_2\pi_5\cdots \pi_n)\in \mathcal{P}_{2,n-3}$
\item $\pi=2\pi_2\cdots \pi_{i-1}1\pi_{i+1}\cdots \pi_n$ for some $i\geq 3$ where $\pi_2\cdots \pi_{i-1}$ is decreasing and $\operatorname{red}(\pi_2\cdots \pi_{i-1}\pi_{i+1}\cdots \pi_n)\in \mathcal{P}_{2,n-2}$, and if $i=3$, then $\pi_2<\pi_4$.
\end{enumerate}

\end{theorem}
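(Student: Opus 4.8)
The plan is to prove both directions of the equivalence simultaneously, working entirely through the block characterization of Lemma~\ref{L:blocks} and organizing the argument around the position of the value $1$. The two starting observations are: (i) since $1$ is the global minimum it always terminates the block that contains it; and (ii) if $\pi \in \mathcal{P}_2$ and $1$ is not in the first block, then Lemma~\ref{L:blocks} applied to $1$'s block and its predecessor forces $\max(\text{predecessor}) \le \min(\text{1's block})+1 = 2$. As that predecessor cannot contain $1$, it must be the singleton $(2)$; and since the block preceding a singleton $(2)$ would have to end in a value $<2$, i.e. in $1$, no such block exists, so $(2)=B_1$ and $1$ lies in $B_2$ with $\pi_1=2$. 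This dichotomy ($1\in B_1$ versus $1\in B_2$) is exactly what separates cases (1)--(4) from cases (5)--(6).

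First I would settle exhaustiveness by location of $1$. If $\pi_1=1$ we are in case (1). If $1\in B_1$ with $\pi_1\neq 1$, let $p$ be the position of $1$; the key step is a squeezing argument using $\max(B_1)=\pi_1\le \min(B_2)+1$ together with $\pi_{p-1}<\pi_1$, which shows that for $p\ge 3$ one cannot have $\pi_{p-1}>\pi_{p+1}$, so removing $1$ keeps $\pi_1\cdots\pi_{p-1}$ as the longest decreasing prefix and we are in case (2); the case $p=2$ with $\pi_1<\pi_3$ also lands in case (2). For $p=2$ with $\pi_1>\pi_3$, the same inequality forces $\min(B_2)=\pi_1-1$, and then tracking where the value $2$ can legally be placed forces $\pi_1\in\{3,4\}$, producing the prefixes $312$ (case 3) and $413$ with $2$ trapped at the bottom of a block $B_3$ of length $\ge 2$ (case 4). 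If instead $1\in B_2$, then $\pi_1=2$, and the inequality between $B_2$ and $B_3$ forces, when $p=3$, either $\pi_4=\pi_2-1$ (case 5) or $\pi_4>\pi_2$ (case 6), while $p\ge 4$ is case (6). Because these conditions pin down disjoint values of $\pi_1$ together with the position of $1$, the six cases are mutually exclusive, which is what is needed for the recurrence.

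For the equivalences I would, in each case, write out the blocks of $\pi$ explicitly and compare the entire list of adjacency inequalities of $\pi$ against those of $\hat\pi$. The prefix blocks contribute only junction inequalities such as $3\le 2+1$, $4\le 3+1$, $2\le 1+1$, and $\pi_2\le (\pi_2-1)+1$, which hold automatically from the explicit structure, while every other inequality matches one of $\hat\pi$ under the relabeling; this gives $\pi\in\mathcal{P}_2 \iff \hat\pi\in\mathcal{P}_2$ and hence both directions at once. Cases (1), (2), and (3) are clean, since there the relevant relabeling is a uniform shift (deleting only the minimal prefix values) and so preserves both the ascent and the consecutive-value alternatives of the block condition, and in case (2) the defining ``longest decreasing prefix'' hypothesis is precisely what prevents the deletion of $1$ from merging two blocks.

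The hard part will be cases (4), (5), and (6), where the reduction deletes an interior or non-minimal value. In (4) and (5) the relabeling is non-uniform: deleting $\{1,2,\pi_2-1\}$ in case (5), for instance, makes the survivors $\pi_2$ and $\pi_2-2$ consecutive although they differed by $2$ in $\pi$, so I must verify that the one junction inequality that is not automatic --- between the successor of $1$'s block and the reduced prefix --- survives the relabeling in both directions. This is exactly where the side conditions earn their place: $\pi_5>\pi_2$ in case (5) forces $\min(B_4)\ge \pi_2-2$, so the deleted value $\pi_2-1$ sits precisely between two surviving consecutive values and the consecutive-value condition is neither created nor destroyed; and ``if $i=3$ then $\pi_2<\pi_4$'' in case (6) prevents the interior deletion of $1$ from merging $B_2$ with $B_3$ (and simultaneously rules out overlap with case (5)). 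I would verify, using the same squeezing inequalities as in the forward direction, that each side condition is both forced by $\pi\in\mathcal{P}_2$ and sufficient to guarantee the junction condition after relabeling, which completes the equivalence.
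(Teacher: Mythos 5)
Your overall strategy is the same as the paper's: classify by the position of the value $1$ and run everything through the block condition of Lemma~\ref{L:blocks}. Two of your sub-arguments do differ from the paper's, and both replacements are correct and arguably more elementary: where the paper shows $1$ lies in the first two blocks by examining $P(\pi)$ (a layered permutation cannot have an ascent before its minimum), you get the same conclusion, plus $\pi_1=2$, directly from the block inequality and the impossibility of a block preceding the singleton $\{2\}$; and where the paper rules out $\pi_1\geq 5$ in the $\pi_1 1 \pi_3$ case by arguing that $P(\pi)$ cannot contain a decreasing run of four or more consecutive values, you instead track where the value $2$ can sit, which forces an impossible chain of singleton blocks. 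Your exhaustiveness direction, including the squeezing inequality $\pi_{i-1} \leq \pi_1 - 1 \leq \min(B_2)$, matches the paper's argument in substance.

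The gap is in your sufficiency claim for case (6). You assert that the side condition ``if $i=3$ then $\pi_2<\pi_4$'' is what prevents the deletion of $1$ from merging blocks, and that the stated hypotheses are sufficient to guarantee the junction conditions after relabeling. For $i\geq 4$ this fails: nothing in case (6) as stated forbids $\pi_{i-1}>\pi_{i+1}$, and in that situation deleting $1$ merges the second block of $\pi$ with the third, so $\hat{\pi}\in\mathcal{P}_{2,n-2}$ does not transfer back to $\pi$. Concretely, $\pi=25413$ satisfies every hypothesis of case (6) with $i=4$: the prefix $54$ is decreasing and $\operatorname{red}(543)=321\in\mathcal{P}_{2,3}$; yet the blocks of $\pi$ are $\{2\},\{5,4,1\},\{3\}$, and $5>3+1$ violates Lemma~\ref{L:blocks}, so $\pi\notin\mathcal{P}_{2,5}$. (This actually exposes an imprecision in the theorem statement itself: case (6) is missing the ``longest decreasing prefix'' hypothesis that case (2) carries, i.e., $\pi_{i-1}<\pi_{i+1}$ whenever $i<n$. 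The paper's own proof never trips over this because it only argues the forward direction, where, as your squeezing argument shows, the missing condition is automatic for sortable $\pi$.) To complete your program you must add that hypothesis to case (6); with it, the relabeling is a uniform shift by $2$, the block structures of $\pi$ and $\hat{\pi}$ correspond, and your junction-by-junction comparison closes the argument exactly as you describe for the other cases.
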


For example, $\left|\mathcal{P}_{2,4}\right|=16$.  Here are the 16 permutations separated according to the six cases in Theorem~\ref{T:prefix}:
\begin{enumerate}
\item 1234, 1243, 1324, 1432, 1342, 1423 
\item 2134, 2143, 3142, 3214, 4213, 4321
\item 3124
\item (none)
\item 2413
\item 2314, 2431
\end{enumerate}

\begin{proof}
First, we claim that 1 must appear in the first two blocks of $\pi$.  Suppose to the contrary that the digit 1 appears in block 3 or later, and let $\pi^* = P(\pi)$.  If the first two blocks of $\pi$ have size 1, they will still be the first two blocks in $\pi^*$ and there will be an ascent between these blocks.  If either of the first two blocks of $\pi$ has size greater than 1, then in $\pi^*$ this block will be reversed to an increasing sequence.  Either way, there will be an ascent in $\pi^*$ before the digit 1.  On the other hand, since $\pi^*$ is one-pop-stack sortable, it must be the direct sum of decreasing permutations, so the first block of $\pi^* = J_i$ for some $i \geq 1$.  This means there cannot be an ascent in $\pi^*$ before the digit 1.  Therefore, the digit 1 must appear in the first two blocks of $\pi$.

Suppose 1 is in the first block of $\pi$, and consider the various sizes of the first block.

If the first block has size 1, then $\pi = 1 \oplus \hat{\pi}$ for some $\hat{\pi} \in \mathcal{P}_{2,n-1}$.  This is case 1.

If the first block has size $i \geq 2$, then $\pi_i=1$ and either $\pi_{i-1}<\pi_{i+1}$ or $\pi_{i-1}>\pi_{i+1}$.  $\pi_{i-1}<\pi_{i+1}$ is case 2.  Case 3 is $\pi_{i-1}>\pi_{i+1}$ where $\pi_{i-1}=3$ and case 4 is $\pi_{i-1}>\pi_{i+1}$ where $\pi_{i-1}=4$.  In both case 3 and case 4, notice that Lemma~\ref{L:blocks} implies that $i=2$ and $\left|B_2\right|=1$ since we have the case that $\max(B_1)=\min(B_2)+1$.  In case 4, the lemma further implies that $B_3$ consists of a decreasing sequence ending in 2.  If $\pi_{i-1}>\pi_{i+1}$ and $a=\pi_{i-1}>4$, then $\pi \notin \mathcal{P}_{2,n}$ since block 2 of $\pi^*$ would need to be equal to $J_{a-1}^{(+1)}$ but it is impossible to construct a decreasing subsequence of consecutive values of length four or more after one pass through a pop stack.

Finally, suppose 1 is in the second block of $\pi$.  Then by Lemma~\ref{L:blocks}, the maximum element of the first block is 2.  If the second block has size 2 and $\pi_2>\pi_4$, we are in case 5.  Otherwise, we are in case 6.
\end{proof}

Notice that cases 1 and 2 give two different ways to build a member of $\mathcal{P}_{2,n}$ from a member of $\mathcal{P}_{2,n-1}$.  Case 6 gives 1 way to build a member of $\mathcal{P}_{2,n}$ from any member of $\mathcal{P}_{2,n-2}$.  Case 3 gives 1 way to build a member of $\mathcal{P}_{2,n}$ from any member of $\mathcal{P}_{2,n-3}$.  Case 4 gives a way to build a member of $\mathcal{P}_{2,n}$ from any member of $\mathcal{P}_{2,n-3}$ that begins with a descent, and case 5 gives a way to build a member of $\mathcal{P}_{2,n}$ from any member of $\mathcal{P}_{2,n-3}$ that begins with an ascent.

Together, we have that $$\left|\mathcal{P}_{2,n}\right| = 2\left|\mathcal{P}_{2,n-1}\right|+\left|\mathcal{P}_{2,n-2}\right|+2\left|\mathcal{P}_{2,n-3}\right|.$$

The sequence obtained from this recurrence (A224232) also enumerates a different family of combinatorial objects as seen in Section~\ref{S:poly}.

\section{Polyominoes}\label{S:poly}

Both one-pop-stack sortable and two-pop-stack sortable permutations are in bijection with special families of polyominoes.  
Although the fact that these sets are equinumerous has been shown computationally, the bijections given in this section are new.  Moreover, they map ascents and descents of the appropriate permutations to nice features of the polyominoes.

Recall that a \emph{polyomino} is an edge-connected set of cells on the lattice $\mathbb{Z}^2$.  The size of a polyomino $P$ is the number of cells in $P$.  The polyominoes of size at most 3 are given in Figure~\ref{F:poly}.  In particular there is one polyomino of size 1, two of size 2, and six of size 3.  In general, the number of polyominoes of size $n$ for large $n$ remains an open problem.  However, we will consider a modified type of polyomino.

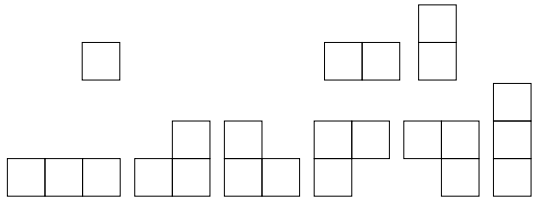
\begin{figure}[hbt]
\begin{center}
\scalebox{0.5}{
\begin{tikzpicture}
					\draw (0,0) rectangle (1,1);		
\end{tikzpicture}}
\hspace{1in}\scalebox{0.5}{
\begin{tikzpicture}
					\draw (0,0) rectangle (1,1);
					\draw (1,0) rectangle (2,1);		
\end{tikzpicture} \hspace{0.1in}
\begin{tikzpicture}
					\draw (0,0) rectangle (1,1);
					\draw (0,1) rectangle (1,2);		
\end{tikzpicture}}

\scalebox{0.5}{
\begin{tikzpicture}
					\draw (0,0) rectangle (1,1);
					\draw (1,0) rectangle (2,1);	
					\draw (2,0) rectangle (3,1);		
\end{tikzpicture}\hspace{0.1in}
\begin{tikzpicture}
					\draw (0,0) rectangle (1,1);
					\draw (1,0) rectangle (2,1);	
					\draw (1,1) rectangle (2,2);		
\end{tikzpicture}\hspace{0.1in}
\begin{tikzpicture}
					\draw (0,0) rectangle (1,1);
					\draw (1,0) rectangle (2,1);	
					\draw (0,1) rectangle (1,2);		
\end{tikzpicture}\hspace{0.1in}
\begin{tikzpicture}
					\draw (0,0) rectangle (1,1);
					\draw (0,1) rectangle (1,2);
				  \draw (1,1) rectangle (2,2);			
\end{tikzpicture}\hspace{0.1in}
\begin{tikzpicture}
					\draw (1,0) rectangle (2,1);
					\draw (0,1) rectangle (1,2);
				  \draw (1,1) rectangle (2,2);			
\end{tikzpicture}\hspace{0.1in}
\begin{tikzpicture}
					\draw (0,0) rectangle (1,1);
					\draw (0,1) rectangle (1,2);
				  \draw (0,2) rectangle (1,3);			
\end{tikzpicture}}

\end{center}

\caption{Small polyominoes in the plane}
\label{F:poly}
\end{figure}

Following \cite{AAB13} and \cite{AAB14}, we consider polyominoes on a twisted cylinder of width $w \in \mathbb{Z}^+$.  These polyominoes are drawn in the first quadrant of $\mathbb{Z}^2$ by identifying all pairs of cells with coordinates $(x,y)$ and $(x-w,y+1)$.  
Visually, instead of drawing polyominoes in the plane, we draw them on the surface shown in Figure~\ref{F:cylinder}.  Notice that this surface is a cylinder with a helix wrapped around it.  Vertical lines together with the helix partition the surface into cells.  If we begin at cell $(x,y)$ and move one cell to the right $w$ times, we end up in cell $(x,y+1)$, one cell above $(x,y)$.  Rather than drawing the twisted cylinder embedded in $\mathbb{R}^3$, we may visualize it in $\mathbb{R}^2$ as shown in Figure~\ref{F:flatcylinder}, where we show both the twisted cylinder of width 2 and the twisted cylinder of width 3.  With this convention, there are only four polyominoes of size 3 on a twisted cylinder of width 2; notice that 
\scalebox{0.2}{
\begin{tikzpicture}
					\draw (0,0) rectangle (1,1);
					\draw (1,0) rectangle (2,1);	
					\draw (2,0) rectangle (3,1);		
\end{tikzpicture}}, \scalebox{0.2}{\begin{tikzpicture}
					\draw (0,0) rectangle (1,1);
					\draw (1,0) rectangle (2,1);	
					\draw (0,1) rectangle (1,2);		
\end{tikzpicture}}, and \scalebox{0.2}{\begin{tikzpicture}
					\draw (1,0) rectangle (2,1);
					\draw (1,1) rectangle (2,2);	
					\draw (0,1) rectangle (1,2);		
\end{tikzpicture}}
 are all the same polyomino on the twisted cylinder of width 2 since they all cover cells 1, 2, and 3 in the appropriate part of Figure~\ref{F:flatcylinder}.  Polyominoes on twisted cylinders were introduced to find improved bounds on the number of polyominoes in the plane.  The polyominoes on width 2 and width 3 cylinders also are in bijection with one-pop-stack and two-pop-stack sortable permutations in a natural way.

\begin{figure}
\begin{center}
\begin{tikzpicture}
\begin{axis}[axis lines=none, xtick=\empty, ytick=\empty,clip=false,
zmin=0, zmax=6*pi,
xmin=-2,xmax=2,
ymin=-2,ymax=2,
width=7cm,height=7cm, view={-90}{-30},colormap = {whiteblack}{color(0cm)  = (white);color(1cm) = (black)}]
     \addplot3[surf,opacity = 0.99,color=gray, samples=50,domain=0:6*pi] 
  	({cos(deg(y))},{sin(deg(y))},{x});
  	\addplot3+[no markers,line width=2pt,color=black,samples y=0,domain=0:pi]
  ({sin(deg(x))},
   {cos(deg(x))},
   {x});
    \addplot3+[no markers,line width=2pt,color=black,samples y=0,domain=2*pi:3*pi]
  ({sin(deg(x))},
   {cos(deg(x))},
   {x});
   \addplot3+[no markers,line width=2pt,color=black,samples y=0,domain=4*pi:5*pi]
  ({sin(deg(x))},
   {cos(deg(x))},
   {x});
   \addplot3+[no markers,line width=2pt,color=black,samples y=0,domain=0:6*pi]
  ({1)},
   {0)},
   {x});
  \addplot3+[no markers,line width=2pt,color=black,samples y=0,domain=0:6*pi]
  ({0)},
   {1)},
   {x});
  \addplot3+[no markers,line width=2pt,color=black,samples y=0,domain=0:6*pi]
  ({0)},
   {-1)},
   {x});
  \addplot3+[no markers,line width=2pt,color=black,samples y=0,domain=0:6*pi]
  ({sqrt(2)/2)},
   {sqrt(2)/2)},
   {x});
   \addplot3+[no markers,line width=2pt,color=black,samples y=0,domain=0:6*pi]
  ({sqrt(2)/2)},
   {-sqrt(2)/2)},
   {x});
\end{axis}
\end{tikzpicture}
\end{center}
\caption{A twisted cylinder}
\label{F:cylinder}
\end{figure}
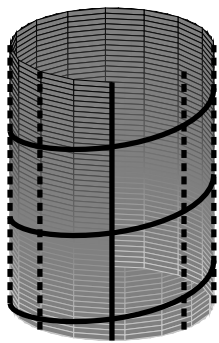

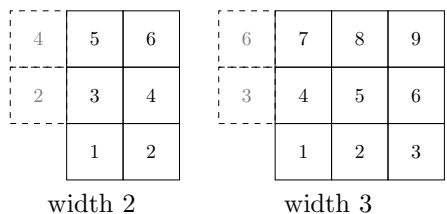
\begin{figure}
\begin{center}
\begin{tabular}{cc}
\scalebox{0.75}{
\begin{tikzpicture}
					\draw (0,0) rectangle (1,1);
					\draw (1,0) rectangle (2,1);
					\draw (0.5, 0.5) node {1};
					\draw (1.5, 0.5) node {2};
					\draw[dashed] (-1,1) rectangle (0,2);
					\draw (0,1) rectangle (1,2);
					\draw (1,1) rectangle (2,2);
					\draw[gray] (-0.5, 1.5) node {2};
					\draw (0.5, 1.5) node {3};
					\draw (1.5, 1.5) node {4};
					\draw[dashed] (-1,2) rectangle (0,3);
					\draw (0,2) rectangle (1,3);
					\draw (1,2) rectangle (2,3);
					\draw[gray] (-0.5, 2.5) node {4};
					\draw (0.5, 2.5) node {5};
					\draw (1.5, 2.5) node {6};
\end{tikzpicture}}&
\scalebox{0.75}{
\begin{tikzpicture}
					\draw (0,0) rectangle (1,1);
					\draw (1,0) rectangle (2,1);
					\draw (2,0) rectangle (3,1);
					\draw (0.5, 0.5) node {1};
					\draw (1.5, 0.5) node {2};
					\draw (2.5, 0.5) node {3};					
					\draw[dashed] (-1,1) rectangle (0,2);
					\draw (0,1) rectangle (1,2);
					\draw (1,1) rectangle (2,2);
					\draw (2,1) rectangle (3,2);
					\draw[gray] (-0.5, 1.5) node {3};
					\draw (0.5, 1.5) node {4};
					\draw (1.5, 1.5) node {5};
					\draw (2.5, 1.5) node {6};
					\draw[dashed] (-1,2) rectangle (0,3);
					\draw (0,2) rectangle (1,3);
					\draw (1,2) rectangle (2,3);
					\draw (2,2) rectangle (3,3);
					\draw[gray] (-0.5, 2.5) node {6};
					\draw (0.5, 2.5) node {7};
					\draw (1.5, 2.5) node {8};
					\draw (2.5, 2.5) node {9};
\end{tikzpicture}}\\
width 2&width 3\\
\end{tabular}
\end{center}
\caption{Twisted cylinders of width 2 and width 3}
\label{F:flatcylinder}
\end{figure}

From Avis and Newborn \cite{AN81} $\pi$ is one-pop-stack sortable if and only if $\pi$ is layered and there are $2^{n-1}$ such permutations of length $n$.  Similarly, Aleksandrowicz, Asinowski, and Barequet \cite{AAB13} observe that there are $2^{n-1}$ polyominoes of size $n$ on a twisted cylinder of width 2.  We reproduce the result that these permutations and polyominoes have the same enumeration via a bijection that preserves an additional property for each set.

\begin{theorem}~\label{T:bijection_1}
The one-pop-stack sortable permutations of length $n$ are in bijection with the polyominoes of size $n$ on a twisted cylinder of width 2.  Moreover, there are the same number of polyominoes of size $n$ on a twisted cylinder of width 2 with $k+1$ squares without an adjacent square to their right as there are one-pop-stack sortable permutations of length $n$ with $k$ ascents, namely $\binom{n-1}{k}$.
\end{theorem}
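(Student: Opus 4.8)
The plan is to linearize the cells of the width-2 twisted cylinder, read off a polyomino as a sequence of unit gaps between its occupied cells, and then match those gaps to the ascent/descent pattern of a layered permutation.

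First I would record the adjacency structure of the cylinder. Numbering the cells $1,2,3,\dots$ as in Figure~\ref{F:flatcylinder}, the cell numbered $i$ occupies the plane position $(0,\lceil i/2\rceil-1)$ when $i$ is odd and $(1,i/2-1)$ when $i$ is even; a direct check then shows that cell $i$ shares an edge with exactly the cells $i-2,i-1,i+1,i+2$ (the cells $i\pm1$ are its horizontal neighbours, with the twist sending the right edge of cell $2j$ to the left edge of cell $2j+1$, and the cells $i\pm2$ are its vertical neighbours across the helical wrap). Consequently, if the occupied cells of a size-$n$ polyomino are listed in increasing order as $s_1<s_2<\cdots<s_n$, the set is edge-connected if and only if $s_{i+1}-s_i\le 2$ for every $i$. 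Since polyominoes on the twisted cylinder are counted up to translation along the helix, I may normalize $s_1=1$; a polyomino of size $n$ is then exactly a gap sequence $(g_1,\dots,g_{n-1})$ with $g_i=s_{i+1}-s_i\in\{1,2\}$, which recovers the count $2^{n-1}$.

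Next I would define the bijection. A one-pop-stack sortable permutation is layered, so it equals $J_{\lambda_1}\oplus\cdots\oplus J_{\lambda_m}$ for a unique composition $\lambda_1+\cdots+\lambda_m=n$, and its descents are precisely the positions interior to a layer while its ascents are precisely the $m-1$ boundaries between consecutive layers. I send such a $\pi$ to the polyomino whose gap sequence is $g_i=2$ when position $i$ is an ascent of $\pi$ and $g_i=1$ when position $i$ is a descent of $\pi$. Because a layered permutation of length $n$ is determined by, and conversely realizes any, subset of ascent positions inside $\{1,\dots,n-1\}$, while a gap sequence is determined by, and realizes any, subset of indices with $g_i=2$, this map is a bijection between two sets of size $2^{n-1}$.

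Finally I would verify the refined statistic. For $i<n$ the square $s_i$ has an occupied cell immediately to its right (namely cell $s_i+1$) exactly when $s_{i+1}=s_i+1$, i.e.\ when $g_i=1$; and the top square $s_n$ never has an occupied cell to its right. Hence the number of squares without an adjacent square to the right equals $1+\#\{i:g_i=2\}=1+\mathrm{asc}(\pi)$, so polyominoes with $k+1$ such squares correspond to permutations with $k$ ascents. The count $\binom{n-1}{k}$ follows because layered permutations with $k$ ascents are exactly the compositions of $n$ into $k+1$ parts, of which there are $\binom{n-1}{k}$. The step demanding the most care is the geometric one: correctly pinning down the edge-adjacency of the linearized cells (in particular that cell $i$ and cell $i+2$ always meet across the helical wrap), and thereby confirming that ``no square to the right'' is precisely the gap-$2$ condition together with the single topmost cell.
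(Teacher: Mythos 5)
Your proposal is correct and constructs exactly the same bijection as the paper: your gap-sequence encoding ($g_i=1$ at descents, $g_i=2$ at ascents, with $s_1$ normalized to the first cell) is precisely the paper's construction of placing a $1\times b_i$ rectangle for each block of $\pi$ with one empty square between consecutive rectangles and wrapping the resulting strip around the width-2 cylinder. The only difference is presentational: you make explicit the linearized adjacency structure of the twisted cylinder (cell $i$ adjacent to $i\pm1,i\pm2$) and the resulting connectivity criterion, which the paper's proof leaves implicit.
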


\begin{proof}
The bijection is as follows.  Consider a layered permutation $\pi$ of length $n$.  Let $b_i$ be the length of block $i$ of $\pi$.  For each $b_i$, construct a $1 \times b_i$ rectangular polyomino.  Place these rectangles on a strip of height 1, leaving one empty square between each rectangle.  Wrap the resulting strip around the twisted cylinder of width 2.  

Here, descents (i.e. adjacent letters in the same block of $\pi$) correspond to left-right adjacent pairs of squares in the corresponding polyomino.  Ascents correspond to squares of the polyomino with no square to their right.  Additionally, the last square of the polyomino cannot have a square to its right.  
\end{proof}

In Figure~\ref{F:width2} we see the permutation 4321657(10)98.  In this case $b_1=4$, $b_2=2$, $b_3=1$, and $b_4=3$.  We construct rectangular polyominoes of widths 4, 2, 1, and 3 and wrap them around the helix of width 2, leaving an empty square between each adjacent pair of rectangles.  

\begin{figure}
\begin{center}
\begin{tabular}{ccc}
\scalebox{0.4}{
\begin{tikzpicture}
					\draw (0,0) rectangle (10,10);		
					\fill[black] (.5,3.5) circle (.2cm);
					\fill[black] (1.5,2.5) circle (.2cm);
					\fill[black] (2.5,1.5) circle (.2cm);
					\fill[black] (3.5,0.5) circle (.2cm);
					\fill[black] (4.5,5.5) circle (.2cm);
					\fill[black] (5.5,4.5) circle (.2cm);
					\fill[black] (6.5,6.5) circle (.2cm);
					\fill[black] (7.5,9.5) circle (.2cm);
					\fill[black] (8.5,8.5) circle (.2cm);
					\fill[black] (9.5,7.5) circle (.2cm);
					\draw (4,0) -- (4,4);
					\draw (0,4) -- (4,4);	
					\draw (4,4) -- (4,6);
					\draw (4,4) -- (6,4);
					\draw (6,6) -- (4,6);
					\draw (6,6) -- (6,4);
					\draw (6,6) -- (7,6);
					\draw (6,6) -- (6,7);
					\draw (7,7) -- (7,6);
					\draw (7,7) -- (6,7);
					\draw (7,7) -- (7,10);
					\draw (7,7) -- (10,7);
\end{tikzpicture}}&
\hspace{0.5in}&
\scalebox{0.6}{\begin{tikzpicture}
					\fill[black]  (0,0) rectangle (1,1);
					\fill[black]  (1,0) rectangle (2,1);	
					\fill[black]  (0,1) rectangle (1,2);
					\fill[black] (1,1) rectangle (2,2);
					\draw  (0,2) rectangle (1,3);
					\fill[gray]  (1,2) rectangle (2,3);
					\fill[gray] (0,3) rectangle (1,4);
					\draw  (1,3) rectangle (2,4);
					\fill[black] (0,4) rectangle (1,5);
					\draw  (1,4) rectangle (2,5);
					\fill[gray]  (0,5) rectangle (1,6);
					\fill[gray]  (1,5) rectangle (2,6);
					\fill[gray]  (0,6) rectangle (1,7);
					\draw (1,6) rectangle (2,7);
					\end{tikzpicture}}
\end{tabular}

\vspace{0.2in}

\scalebox{0.5}{
\begin{tikzpicture}
\draw[step=1cm,color=gray,dashed] (0,0) grid (13,1);
					\fill[black] (0,0) rectangle (1,1);
					\fill[black] (1,0) rectangle (2,1);	
					\fill[black] (2,0) rectangle (3,1);
					\fill[black] (3,0) rectangle (4,1);
					\fill[gray] (5,0) rectangle (6,1);
					\fill[gray] (6,0) rectangle (7,1);	
					\fill[black] (8,0) rectangle (9,1);
					\fill[gray] (10,0) rectangle (11,1);
					\fill[gray] (11,0) rectangle (12,1);	
					\fill[gray] (12,0) rectangle (13,1);
\end{tikzpicture}}

\end{center}
\caption{The one-pop-stack sortable permutation 4321657(10)98 and its corresponding polyomino}
\label{F:width2}
\end{figure}
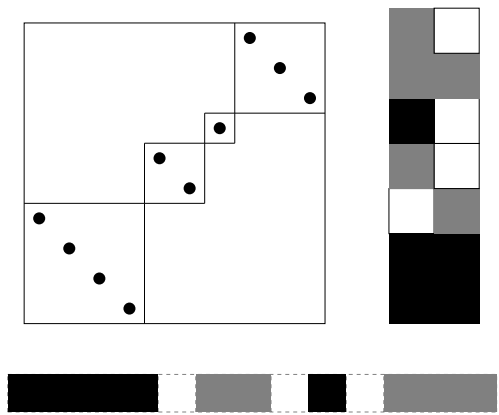

We showed in Corollary~\ref{C:enum} that two-pop-stack sortable permutations are counted by sequence A224232 in the On-Line Encyclopedia of Integer Sequences.  Aleksandrowicz, Asinowski, and Barequet \cite{AAB13} showed that polyominoes on a twisted cylinder of width 3 have this same enumeration.  They counted these polyominoes via a recurrence in cases, of a similar flavor to the proof of Theorem~\ref{T:prefix} and also gave a closed formula for the recurrence.

Aleksandrowicz, Asinowski, and Barequet \cite{AAB13} also found bijections between these polyominoes and three permutation classes containing eight patterns of length $4$.  

\begin{theorem}~\label{T:AAB} (Aleksandrowicz, Asinowski, and Barequet)
The classical permutation classes 
$$\Av(2431,3412,3421,4123,4213,4231,4312,4321),$$
$$\Av(2413,3412,3421,4123,4132,4213,4312,4321),\text{ and }$$
$$\Av(1342,1432,3142,3412,4123,4132,4213,4312)$$ 
are Wilf equivalent to each other and are in bijection with the polyominoes on a twisted cylinder of width 3.
\end{theorem}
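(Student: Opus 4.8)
The plan is to prove the Wilf equivalences and the bijection with width-3 polyominoes together, by showing that each of the three classes is counted by the order-three recurrence of Equation~\ref{E:linrec} and then matching the underlying recursive structures. Since a sequence obeying that recurrence is determined by three initial terms, and since (by Aleksandrowicz, Asinowski, and Barequet) the width-3 polyominoes already satisfy it, it will be enough to derive the recurrence, with equal initial values, for each class; the explicit bijections will then follow from an isomorphism of recursion trees.

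First I would fix the class $C_1 = \Av(2431,3412,3421,4123,4213,4231,4312,4321)$ and stratify $C_1 \cap \mathcal{S}_n$ by the position of the value $1$, in the same spirit as the proof of Theorem~\ref{T:prefix}. The goal is to show that every $\pi \in C_1 \cap \mathcal{S}_n$ is obtained in exactly one way from a strictly shorter member of $C_1$, through a finite list of reduction rules whose multiplicities are $2$ for length-one reductions, $1$ for a length-two reduction, and $2$ for length-three reductions; reading these off yields $a_n = 2a_{n-1} + a_{n-2} + 2a_{n-3}$. I would then carry out an analogous (though not identical) decomposition for $C_2$ and $C_3$. Obtaining the same recurrence and the same small cases for all three classes simultaneously proves they are Wilf equivalent to one another, and---because width-3 polyominoes obey the same recurrence---equinumerous with the polyominoes.

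To upgrade equinumeracy to explicit bijections, I would read each decomposition as a generating tree whose level-$n$ vertices are the length-$n$ members of the class and whose edges are labelled by which reduction rule produced the vertex. The polyomino enumeration of Aleksandrowicz, Asinowski, and Barequet supplies an isomorphic generating tree, since---as the excerpt notes---their count proceeds by cases ``of a similar flavor to the proof of Theorem~\ref{T:prefix}'' and can therefore be arranged to have the same branching rule. Sending the $j$-th child under a given rule on the permutation side to the $j$-th child under the corresponding rule on the polyomino side gives a bijection from each $C_i$ to the width-3 polyominoes; composing any two of these produces the bijections among $C_1$, $C_2$, $C_3$ asserted in the statement.

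The hard part will be the structural case analysis, which has to be redone for each class because no symmetry of the square relates them: a direct check shows $C_1$ and $C_2$ share six forbidden patterns but differ in the pair $\{2431,4231\}$ versus $\{2413,4132\}$, so the Wilf equivalence is genuinely nontrivial rather than a relabelling. For each class I must verify from the eight patterns alone both that the listed cases are exhaustive and mutually exclusive and that every case is in length-decreasing bijection with an appropriately restricted family of shorter class members, with the multiplicities coming out as $2,1,2$. The length-three reductions are the delicate ones, since the entry governing the reduction occupies a different role (smallest value, largest value, or an interior value) in each class, and one must confirm that the restrictions placed on the shorter permutation---beginning with an ascent versus a descent, as in cases 4 and 5 of Theorem~\ref{T:prefix}---partition the reductions exactly once. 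A less delicate route, yielding the Wilf equivalences but not the bijections, would be to compute the generating function of each class from its regular insertion encoding and verify that all three equal $\frac{1-x-x^2-x^3}{1-2x-x^2-2x^3}$ of Corollary~\ref{C:enum}.
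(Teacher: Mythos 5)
First, a point of comparison: the paper does not prove this statement at all --- it is quoted, with attribution, from Aleksandrowicz, Asinowski, and Barequet \cite{AAB13}, and is used as a black box (together with Theorem~\ref{T:patterns} and Theorem~\ref{T:bijection_2}) to deduce the Wilf equivalence with the non-classical class of two-pop-stack sortable permutations. So there is no internal proof for your attempt to be measured against; what can be assessed is whether your outline would stand as an independent proof.

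As written, it would not. The entire combinatorial content is deferred: for each class $C_i$ you state as a \emph{goal} that every member of length $n$ arises uniquely from a shorter member under reduction rules with multiplicities $2$, $1$, $2$ (for reductions by one, two, and three letters), but you never exhibit the rules or verify exhaustiveness, disjointness, or the multiplicities --- and that is precisely where all the work lies, as you yourself concede in calling it ``the hard part.'' Note also that satisfying the recurrence does not by itself force such a clean production structure: in the paper's own Proposition~\ref{P:recurrence} the recursion involves a subtraction (the $b(n-1,k-1)$ term), and in Theorem~\ref{T:prefix} the two length-three rules (cases 4 and 5) apply only to restricted subsets (members beginning with a descent, respectively an ascent), so the naive ``$2,1,2$'' picture already needs refinement in the one case the paper works out. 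This matters for your final step: to upgrade equinumeracy to a bijection you must build it by induction on $n$, and the induction only closes if the partial bijections already constructed at sizes $n-1$, $n-2$, $n-3$ map each restricted subset on one side onto the corresponding restricted subset on the other (e.g.\ begins-with-ascent onto its polyomino analogue). That is an extra invariant which must be formulated and preserved throughout the induction; it is not a formal consequence of matching counting sequences, and your appeal to AAB's recurrence being ``of a similar flavor,'' so that it ``can be arranged to have the same branching rule,'' does not supply it. A correct argument along these lines exists --- it is essentially what \cite{AAB13} carry out --- but your proposal records the strategy, not the proof.
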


The bijection to be given in this paper between these polyominoes and the two-pop-stack sortable permutations is based on the block structure permutations need to be sortable.  Moreover, while the sortable permutations studied here are also classified by the avoidance of eight patterns, they do not form a classical permutation class. That is, subpermutations of the sortable permutations are not guaranteed to be sortable.  As such, we obtain a Wilf equivalence between  classical permutation classes and the non-classical permutation class introduced in Theorem~\ref{T:patterns}.  

For the next theorem, recall $a(n,k)$ is the number of two-pop-stack sortable permutations of length $n$ with $k$ ascents.

\begin{theorem}~\label{T:bijection_2}
The two-pop-stack sortable permutations of length $n$ are in bijection with the polyominoes of size $n$ on a twisted cylinder of width 3.  Moreover, there are $a(n,k)$ polyominoes of size $n$ on a twisted cylinder of width 3 with $k+1$ squares without an adjacent square to its right.
\end{theorem}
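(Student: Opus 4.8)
The plan is to factor the desired bijection through an intermediate combinatorial object that captures exactly the block data isolated in Lemma~\ref{L:blocks}. Call a \emph{decorated composition} of $n$ a composition $(b_1,\dots,b_\ell)$ of $n$ together with a labeling of each of the $\ell-1$ junctions as \emph{ascent-type} or \emph{descent-type}, subject to the rule that a descent-type label at the junction between parts $b_j$ and $b_{j+1}$ is permitted only when $b_j\ge 2$ or $b_{j+1}\ge 2$. I would prove the theorem by establishing two bijections with decorated compositions: one with $\mathcal{P}_{2,n}$ on the permutation side and one with the size-$n$ polyominoes on the width-$3$ twisted cylinder on the geometric side, arranged so that the number of parts corresponds to the two tracked statistics.

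For the geometric side I would argue as follows. A polyomino on the width-$3$ cylinder, normalized by the helix translation $i\mapsto i+1$ so that its least occupied cell is cell $1$, is a finite subset of the linearly ordered cells whose only adjacencies are $i\sim i+1$ (horizontal) and $i\sim i+3$ (vertical). Decomposing the occupied cells into maximal runs of consecutive indices yields $1\times b_i$ rectangles whose lengths I take to be the parts; the gap between consecutive runs must have size $1$ or $2$, since a gap of at least $3$ admits no vertical edge $i\sim i+3$ joining the two runs and hence disconnects the polyomino, while a gap of $0$ violates maximality. I label a gap of $2$ ascent-type and a gap of $1$ descent-type. Connectivity across a gap of $1$ between a run ending at $p$ and a run starting at $p+2$ holds exactly when $p\sim p+3$ or $p-1\sim p+2$ is an edge, i.e. exactly when one of the two adjacent runs has length $\ge 2$, which is precisely the admissibility rule for a descent-type label. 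This gives a size-preserving bijection between these polyominoes and decorated compositions, and a cell lacks an occupied right neighbor exactly when it is the last cell of a run, so the number of such cells equals the number of parts $\ell$.

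For the permutation side, Lemma~\ref{L:blocks} guarantees that the block sizes together with the two possible relations $\max(B_i)<\min(B_{i+1})$ (ascent-type) and $\max(B_i)=\min(B_{i+1})+1$ (descent-type) are exactly the data of a decorated composition, with the admissibility rule matching the observation in the proof of Proposition~\ref{P:recurrence} that a descent-type relation forces an adjacent block of size $\ge 2$. The content is that this data recovers $\pi$ uniquely, which I would show by reconstructing the value sets $V_i$ of the blocks. First, the ascent-type junctions partition the blocks into maximal descent-chains, and propagating the inequalities $\max(B_j)\le\min(B_{j+1})+1$ of Lemma~\ref{L:blocks} in both directions shows that consecutive chains occupy consecutive value-intervals $\{1,\dots,N_1\},\{N_1+1,\dots,N_2\},\dots$. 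Second, within a single descent-chain the assignment is forced by locating the maximum value, whose block is determined by the size sequence through the cascade coming from $\max(B_i)=\min(B_{i+1})+1$ (this relation forces every block after the block of the maximum to be a singleton), after which one inducts on the chain size. Since the reconstruction is deterministic, this yields a bijection $\mathcal{P}_{2,n}\leftrightarrow$ decorated compositions under which the number of blocks, namely $\mathrm{asc}(\pi)+1$, equals the number of parts.

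Composing the two bijections gives a bijection between $\mathcal{P}_{2,n}$ and the size-$n$ polyominoes on the width-$3$ cylinder; since both intermediate maps send the number of parts to $\mathrm{asc}(\pi)+1$ and to the number of cells with no right neighbor respectively, a permutation with $k$ ascents corresponds to a polyomino with $k+1$ such cells, giving the refined count $a(n,k)$. I expect the main obstacle to be the uniqueness half of the permutation side: proving rigorously that the block sizes together with the junction labels determine $\pi$, because the value sets within a descent-chain interlock, so the block containing the chain maximum need not be the last block of the chain (for instance $2413$ has chains yielding $\{1,2,3,4\}$ with maximum $4$ sitting in the middle block), and both the intra-chain cascade and the inter-chain interval separation require careful bookkeeping with the inequalities of Lemma~\ref{L:blocks}. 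As a lighter alternative that still produces a genuine bijection, I note that once the composite map $\mathcal{P}_{2,n}\to\{\text{polyominoes}\}$ is shown to be injective and statistic-preserving, surjectivity is automatic from the equality of cardinalities already recorded in Corollary~\ref{C:enum} and Theorem~\ref{T:AAB}.
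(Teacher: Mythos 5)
Your construction is exactly the paper's: each block of $\pi$ becomes a $1\times b_i$ rectangle, an ascent-type junction ($\max(B_i)<\min(B_{i+1})$) becomes a gap of two cells, a descent-type junction ($\max(B_i)=\min(B_{i+1})+1$) becomes a gap of one cell, the strip is wrapped on the width-3 cylinder, and ascents are tracked by cells with no right neighbor---so this is the same approach, not a different route. The only divergence is that you push on the bijectivity verification that the paper leaves implicit (its proof merely describes the map and checks connectivity); your flagged obstacle---that block sizes together with junction types determine $\pi$---is genuine but true, and it can be closed more cleanly than by your chain analysis: the decorated composition determines the descent set of $P(\pi)$ (descents occur exactly at the descent-type junctions), a layered permutation is determined by its descent set, so $P(\pi)$ is determined, and $\pi$ is then recovered by cutting $P(\pi)$ into consecutive segments of lengths $b_1,\dots,b_\ell$ and reversing each segment.
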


\begin{proof}
As in the bijection for one-pop-stack sortable permutations, descents of $\pi$ are sent to left-right adjacent pairs of squares in the corresponding polyomino and ascents are sent to squares with no square to the right.  And again the last square of the polyomino will have no square to its right.

Consider a two-pop-stack sortable permutation $\pi$.  Let $b_i$ be the length of block $i$ of $\pi$.  For each $b_i$, construct a $1 \times b_i$ rectangular polyomino.  As before, we place these rectangles on a strip of height 1 with an extra consideration.  For blocks $B_i$ and $B_{i+1}$, by Lemma~\ref{L:blocks}, either $\max(B_i)<\min(B_{i+1})$ or $\max(B_i) = \min(B_{i+1})+1$.  The first case may happen no matter the size of the blocks, but $\max(B_i) = \min(B_{i+1})+1$ requires that at least one of the blocks has size greater than 1.  Accordingly, if $\max(B_i)<\min(B_{i+1})$, the corresponding $1\times b_i$ and $1\times b_{i+1}$ rectangles should have two empty squares between them.  This guarantees that the last square in the $1 \times b_i$ rectangle is below the first square in the $1 \times b_{i+1}$ rectangle.  On the other hand, if $\max(B_i) = \min(B_{i+1})+1$, the corresponding $1\times b_i$ and $1\times b_{i+1}$ rectangles should have one empty square between them.  Since at least one of the blocks has size greater than one, the two blocks still form part of a connected polyomino.  Wrap the resulting strip around the twisted cylinder of width 3. 
\end{proof} 

In Figure~\ref{F:width3} we see the permutation 64321587(12)(10)9(14)(13)(11).  In this case $b_1=5$, $b_2=1$, $b_3=2$, $b_4=3$, and $b_5=3$.  We construct rectangular polyominoes of widths 5, 1, 2, 3, and 3.  Blocks 1 and 2 as well as blocks 4 and 5 have $\max(B_i) = \min(B_{i+1})+1$ so we leave one empty square between the corresponding rectangles.  Blocks 2 and 3 as well as blocks 3 and 4 have $\max(B_i)<\min(B_{i+1})$ so we leave two empty squares between the corresponding rectangles.  Then, we wrap the resulting strip of separated rectangles around the twisted cylinder of width 3.

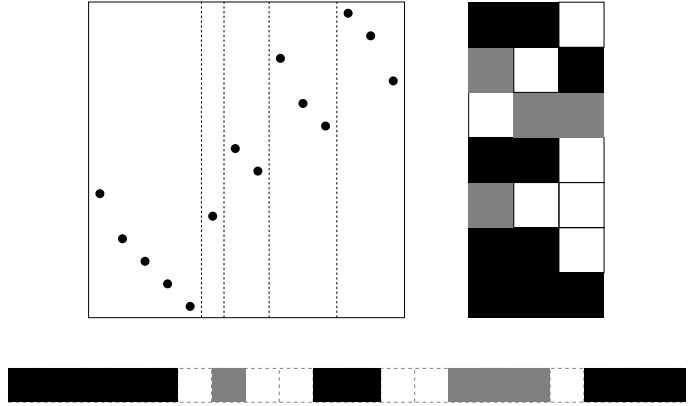
\begin{figure}
\begin{center}
\begin{tabular}{ccc}
\scalebox{0.3}{
\begin{tikzpicture}
					\draw (0,0) rectangle (14,14);		
					\fill[black] (.5,5.5) circle (.2cm);
					\fill[black] (1.5,3.5) circle (.2cm);
					\fill[black] (2.5,2.5) circle (.2cm);
					\fill[black] (3.5,1.5) circle (.2cm);
					\fill[black] (4.5,0.5) circle (.2cm);
					\fill[black] (5.5,4.5) circle (.2cm);
					\fill[black] (6.5,7.5) circle (.2cm);
					\fill[black] (7.5,6.5) circle (.2cm);
					\fill[black] (8.5,11.5) circle (.2cm);
					\fill[black] (9.5,9.5) circle (.2cm);
					\fill[black] (10.5,8.5) circle (.2cm);
					\fill[black] (11.5,13.5) circle (.2cm);
					\fill[black] (12.5,12.5) circle (.2cm);
					\fill[black] (13.5,10.5) circle (.2cm);
					\draw[dashed] (5,0) -- (5,14);
					\draw[dashed] (6,0) -- (6,14);
					\draw[dashed] (8,0) -- (8,14);
					\draw[dashed] (11,0) -- (11,14);

\end{tikzpicture}}&
\hspace{0.5in}&
\scalebox{0.6}{\begin{tikzpicture}
					\fill[black]  (0,0) rectangle (1,1);
					\fill[black]  (1,0) rectangle (2,1);
					\fill[black]  (2,0) rectangle (3,1);
					\fill[black]  (0,1) rectangle (1,2);
					\fill[black]  (1,1) rectangle (2,2);
					\draw  (2,1) rectangle (3,2);
					\fill[gray]  (0,2) rectangle (1,3);
					\draw  (1,2) rectangle (2,3);
					\draw  (2,2) rectangle (3,3);
					\fill[black]  (0,3) rectangle (1,4);
					\fill[black]  (1,3) rectangle (2,4);
					\draw  (2,3) rectangle (3,4);
					\draw  (0,4) rectangle (1,5);
					\fill[gray]  (1,4) rectangle (2,5);
					\fill[gray]  (2,4) rectangle (3,5);
					\fill[gray]  (0,5) rectangle (1,6);
					\draw (1,5) rectangle (2,6);
					\fill[black]  (2,5) rectangle (3,6);
					\fill[black]  (0,6) rectangle (1,7);
					\fill[black] (1,6) rectangle (2,7);
					\draw  (2,6) rectangle (3,7);
					\end{tikzpicture}}
\end{tabular}

\vspace{0.2in}

\scalebox{0.45}{
\begin{tikzpicture}
\draw[step=1cm,color=gray,dashed] (0,0) grid (20,1);
					\fill[black] (0,0) rectangle (1,1);
					\fill[black] (1,0) rectangle (2,1);	
					\fill[black] (2,0) rectangle (3,1);
					\fill[black] (3,0) rectangle (4,1);
					\fill[black] (4,0) rectangle (5,1);
					\fill[gray] (6,0) rectangle (7,1);	
					\fill[black] (9,0) rectangle (10,1);
					\fill[black] (10,0) rectangle (11,1);
					\fill[gray] (13,0) rectangle (14,1);
					\fill[gray] (14,0) rectangle (15,1);	
					\fill[gray] (15,0) rectangle (16,1);
					\fill[black] (17,0) rectangle (18,1);
					\fill[black] (18,0) rectangle (19,1);	
					\fill[black] (19,0) rectangle (20,1);
\end{tikzpicture}}

\end{center}
\caption{The two-pop-stack sortable permutation 64321587(12)(10)9(14)(13)(11) and its corresponding polyomino}
\label{F:width3}
\end{figure}

The following Wilf equivalence is an immediate consequence of Theorems~\ref{T:patterns}, ~\ref{T:AAB}, and ~\ref{T:bijection_2}. 

\begin{cor} 
The non-classical permutation class \\ $\Av(2341, 3412, 3421,4123, 4231, 4312, 4\overline{1}352, 413\overline{5}2)$ is Wilf equivalent to 
$$\Av(2431,3412,3421,4123,4213,4231,4312,4321),$$
$$\Av(2413,3412,3421,4123,4132,4213,4312,4321),\text{ and }$$
$$\Av(1342,1432,3142,3412,4123,4132,4213,4312)$$ 
\end{cor}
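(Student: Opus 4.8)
The plan is to observe that all four classes listed in the corollary are enumerated by the same integer sequence at every length $n$, so the statement follows by chaining the three cited results; no new combinatorial work is required. First I would invoke Theorem~\ref{T:patterns}, which identifies the two-pop-stack sortable permutations $\mathcal{P}_{2,n}$ with precisely the permutations of length $n$ avoiding $2341, 3412, 3421, 4123, 4231, 4312, 4\overline{1}352$, and $413\overline{5}2$. Consequently the non-classical class $\Av(2341, 3412, 3421, 4123, 4231, 4312, 4\overline{1}352, 413\overline{5}2)$ named in the corollary is, by definition, exactly $\mathcal{P}_2$ graded by length, and its counting sequence is $\left|\mathcal{P}_{2,n}\right|$.

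Next I would apply Theorem~\ref{T:bijection_2}, which supplies an explicit length-preserving bijection between $\mathcal{P}_{2,n}$ and the polyominoes of size $n$ on a twisted cylinder of width $3$; this shows that $\left|\mathcal{P}_{2,n}\right|$ equals the number of such polyominoes for every $n$. Finally, Theorem~\ref{T:AAB} asserts that each of the three classical classes $\Av(2431,3412,3421,4123,4213,4231,4312,4321)$, $\Av(2413,3412,3421,4123,4132,4213,4312,4321)$, and $\Av(1342,1432,3142,3412,4123,4132,4213,4312)$ is in bijection with these same width-$3$ polyominoes. Stringing the bijections together gives, for every $n$, a common count shared by $\mathcal{P}_{2,n}$, the width-$3$ polyominoes of size $n$, and each of the three classical classes at length $n$.

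Since two classes are Wilf equivalent precisely when they contain the same number of members of each length, this chain of length-preserving bijections $\mathcal{P}_{2,n} \leftrightarrow \text{(width-}3\text{ polyominoes)} \leftrightarrow \text{(each classical class)}$ is exactly the required equivalence. There is no substantive obstacle here, as all the real content has been discharged in Theorems~\ref{T:patterns}, \ref{T:bijection_2}, and \ref{T:AAB}; the only point I would flag explicitly is that the barred-pattern class is not a classical (hereditary) class, so the assertion is one of pure enumeration and does not claim that the non-classical class is order-ideal-equivalent to the classical ones. With that caveat stated, the corollary follows immediately.
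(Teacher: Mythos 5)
Your proposal is correct and is precisely the paper's own argument: the authors state that the corollary is an immediate consequence of Theorems~\ref{T:patterns}, \ref{T:AAB}, and \ref{T:bijection_2}, chained together exactly as you describe. Your explicit remark that the equivalence is purely enumerative (since the barred-pattern class is not a classical class) is a fair clarification of what the paper already notes in the surrounding discussion.
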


Despite the naturalness of the bijection given in Theorem~\ref{T:bijection_1} and Theorem~\ref{T:bijection_2}, it turns out that these are not two special cases of a more general phenomenon.  One might conjecture that three-pop-stack sortable permutations are in bijection with polyominoes on a twisted cylinder of width 4.  However, all 24 permutations of length 4 are $m$-pop-stack sortable when $m \geq 3$ and there are only 19 polyominoes of size 4 in the plane (and thus on a a twisted cylinder of width $w \geq 4$).  The generalization of the bijections provided does however give an injection from the set of polyominoes on a twisted cylinder of width $w=m+1$ to the set of $m$-pop-stack sortable permutations for $m \geq 3$.

The enumeration of $m$-pop-stack-sortable permutations for $m \geq 3$ is considered in a recent paper of Claesson and Gu$\eth$mundsson \cite{CG17}, without bijective correspondences.

\section{Basic properties and conjectures on sortable permutations classified by ascents}~\label{S:more_rec}

The triangles formed by $\{a(n,k)\}$ (the number of two-pop-stack sortable permutations of length $n$ with $k$ ascents) and $\{b(n,k)\}$ (the number of two-pop-stack sortable permutations of length $n$ with $k$ ascents where the last block has size $1$) from Proposition~\ref{P:recurrence} have some other interesting properties in their own right.  These coefficients characterize the sortable permutations partially in terms of partitions, so it may not be surprising there are some parallels with the more general notion of partitions.

For example, if $\pi$ is two-pop-stack sortable, has length $n \geq 3$, and $n-2$ ascents, then $\pi$ has $n-1$ blocks.  There are four ways to structure the one block of size two (relative to the blocks before and after it) if the block is an interior block and only two ways to structure the block of size two (relative to the one block adjacent to it) if it is the first or last block.  Hence $a(n,n-2) = 4(n-2)$.  Lemma~\ref{L:basic} lists several small cases that can also be handled relatively easily and also shows a symmetry found in the $b(n,k)$ case.

\begin{lemma}~\label{L:basic}
We have the following identities:
\begin{enumerate}
\item $a(n,n-2) = 4(n-2)$ for $n \geq 3$.
\item $a(n,1) = 2(n-2)$ for $n \geq 4$.
\item $a(n,2) = 2n(n-3)$ for $n \geq 4$.
\item $b(n,n-2) = b(n,2)= 4(n-3)+2$ for $n \geq 4$.
\end{enumerate}
\end{lemma}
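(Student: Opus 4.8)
\emph{Proof plan.} The plan is to turn every quantity into a finite sum over block compositions, using Lemma~\ref{L:blocks} as the structural engine. By that lemma a two-pop-stack sortable permutation $\pi$ with $k$ ascents is precisely a permutation with $k+1$ blocks in which each adjacent pair $B_i,B_{i+1}$ is joined either by a \emph{gap} ($\max(B_i)<\min(B_{i+1})$) or by a \emph{tight join} ($\max(B_i)=\min(B_{i+1})+1$), where a tight join is available only when $\max(|B_i|,|B_{i+1}|)\ge 2$. First I would verify that this data is a complete invariant: $\pi$ is determined by, and reconstructible from, the composition $(b_1,\dots,b_{k+1})$ of $n$ recording its block sizes together with a choice of gap or tight at each of the $k$ boundaries. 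Granting this, each tight-eligible boundary contributes a factor $2$ and each forced (gap-only) boundary a factor $1$, so $a(n,k)=\sum_{C}2^{t(C)}$, where $C$ ranges over compositions of $n$ into $k+1$ parts and $t(C)$ counts the boundaries of $C$ not flanked by two parts equal to $1$; and $b(n,k)$ is the same sum restricted to compositions whose last part is $1$.

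For part (1), $k=n-2$ forces $k+1=n-1$ parts summing to $n$, hence exactly one part equal to $2$ and all others equal to $1$. A boundary is tight-eligible exactly when it touches the unique size-$2$ part, so $t(C)$ is $2$ when that part is interior and $1$ when it sits at an end. Summing $2^{t(C)}$ over the $n-1$ possible positions of the size-$2$ part gives $4(n-3)+2\cdot 2=4(n-2)$, the $n-3$ interior positions each contributing $4$ and the two end positions each contributing $2$. The first equality in (4) is the same computation with the last part pinned to $1$: the size-$2$ part may now sit only in positions $1,\dots,n-2$, yielding $2+4(n-3)=4(n-3)+2$ for $b(n,n-2)$.

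For part (2), $k=1$ gives the $n-1$ compositions of $n$ into two parts; since $n\ge 3$ forces one part to exceed $1$, the single boundary is always tight-eligible and $a(n,1)=2(n-1)$ (this matches the $x^n y$ coefficients $6,8,10,\dots$ in the expansion after Theorem~\ref{T:mgf}, so the displayed constant should read $2(n-1)$). For part (3) I would expand $2^{t(C)}=(1+e_1)(1+e_2)$ over the three-part compositions, where $e_j\in\{0,1\}$ flags eligibility of boundary $j$; only $(1,1,n-2)$ kills $e_1$ and only $(n-2,1,1)$ kills $e_2$, and for $n\ge 4$ these are distinct, so $\sum e_1=\sum e_2=\binom{n-1}{2}-1$ and $\sum e_1e_2=\binom{n-1}{2}-2$. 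Adding the $\binom{n-1}{2}$ compositions themselves gives $4\binom{n-1}{2}-4=2n(n-3)$. The remaining equality in (4) is this count with last part $1$: over the $n-2$ compositions $(b_1,b_2,1)$ of $n$ the first boundary is always eligible (as $n\ge4$) and the second is eligible iff $b_2\ge 2$, so the total is $2\big[(n-2)+(n-3)\big]=4(n-3)+2$, agreeing with $b(n,n-2)$.

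The one genuinely delicate step is the invariant claim of the first paragraph; everything after it is routine finite summation. The forward direction is immediate, and for the reverse I would argue that a gap places every value of $B_i$ below every value of $B_{i+1}$, so the maximal runs of tight joins occupy consecutive intervals of values, and that inside such a run the relations $\max(B_i)=\min(B_{i+1})+1$ force a unique interleaving of the values among the blocks. This is the part I expect to need the most care. As an independent cross-check that sidesteps re-proving the bijection, each identity can instead be obtained by induction directly from the recurrences of Proposition~\ref{P:recurrence}; this also makes the coincidence $b(n,n-2)=b(n,2)$ transparent, since both reduce to $4(n-3)+2$ after matching the base cases $n=4,5$.
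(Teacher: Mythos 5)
Your proof is correct and follows essentially the same route as the paper: the paper's own argument (given explicitly only for part (1), with the remaining parts left as ``handled relatively easily'') is exactly your count over block-size compositions with a factor of $2$ at each boundary not flanked by two singleton parts, and your computations for parts (1), (3), and (4) agree with it. The step you flag as delicate---that a sortable permutation is reconstructible from its block sizes together with the gap/tight labels---is also left implicit by the paper (it underlies the proof of Proposition~\ref{P:recurrence}); it can be closed cleanly by observing that the sizes and labels determine the descent set of $P(\pi)$, that a layered permutation is uniquely determined by its descent set, and that $\pi$ is recovered from $P(\pi)$ by reversing the blocks.

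More importantly, your correction to part (2) is right: the stated identity $a(n,1)=2(n-2)$ is an error in the paper. The true value is $a(n,1)=2(n-1)$ for $n\ge 3$, as your composition count shows; this agrees with the coefficients $6y$, $8y$, $10y$ of $x^4$, $x^5$, $x^6$ in the expansion following Theorem~\ref{T:mgf}, and with the paper's own list of $\mathcal{P}_{2,4}$, in which exactly six permutations ($1432$, $2143$, $2431$, $3142$, $3214$, $4213$) have one ascent.
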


Based on our data, it also appears the sequences $\{a(n,k)\}$ and $\{b(n,k)\}$ may have some other nice features:

\begin{conjecture}  The sequence $\{a(n,k)\}_{k=0}^{n}$ is log-concave for all $n$.
\end{conjecture}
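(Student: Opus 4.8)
The plan is to establish the stronger property that each row polynomial $A_n(y)=\sum_{k\ge 0}a(n,k)\,y^k$ is \emph{real-rooted} with all of its roots lying in $(-\infty,0]$; by Newton's inequalities this immediately forces $\{a(n,k)\}_k$ to be log-concave (in fact strictly so, with no internal zeros, since a real-rooted polynomial with nonnegative coefficients has a P\'olya-frequency coefficient sequence). The first step is to convert the rational generating function of Theorem~\ref{T:mgf} into a recurrence on the $A_n$. Writing $A(x,y)=\sum_n A_n(y)x^n$ and clearing the denominator, the identity $A(x,y)\bigl(1-x-xy-x^2y-2x^3y^2\bigr)=1-xy-x^2y+x^3y-2x^3y^2$ has a right-hand side of $x$-degree $3$, so extracting the coefficient of $x^n$ for $n\ge 4$ yields the homogeneous recurrence
\[
A_n(y)=(1+y)\,A_{n-1}(y)+y\,A_{n-2}(y)+2y^2\,A_{n-3}(y),
\]
with initial data $A_1=1$, $A_2=1+y$, and $A_3=1+4y+y^2$ read off from the expansion printed after Theorem~\ref{T:mgf}. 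One verifies it against $A_4=1+6y+8y^2+y^3$ and $A_5=1+8y+20y^2+12y^3+y^4$.

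With this recurrence in hand I would attempt an induction organized around root interlacing. The model to imitate is the classical lemma for second-order recurrences $P_n=\alpha(y)P_{n-1}+\beta(y)P_{n-2}$ with $\beta\le 0$, where real-rootedness and interlacing of consecutive members propagate automatically. The idea here is to carry, as a strengthened inductive hypothesis, a compatible interlacing chain among the three consecutive polynomials $A_{n-3},A_{n-2},A_{n-1}$ (each interlacing the next), and then to show that the nonnegative combination $(1+y)A_{n-1}+y\,A_{n-2}+2y^2A_{n-3}=A_n$ is again real-rooted and interlaces $A_{n-1}$. A useful reduction is the substitution $y=-t$, after which one tracks the signs of the $A_n(-t)$ at the interlacing nodes and checks that the required sign alternations survive the combination.

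The main obstacle is precisely the third-order term $2y^2A_{n-3}$: a positive linear combination of three consecutive real-rooted polynomials need not be real-rooted, and the standard interlacing machinery is tailored to two-term recurrences, so it does not apply off the shelf. The characteristic polynomial $t^3-(1+y)t^2-yt-2y^2$ does not factor over the rationals in $y$, so there is no obvious reduction of the recurrence to second order. The heart of the proof is therefore to isolate the correct interlacing invariant preserved by this genuinely third-order recurrence—most plausibly an auxiliary sequence (perhaps built from the companion polynomials $B_n(y)=\sum_k b(n,k)y^k$, which satisfy their own recurrence from Proposition~\ref{P:recurrence}) that restores a two-term interlacing structure. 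As a hedge, before committing to this route I would first confirm real-rootedness of $A_n$ numerically for $n$ well beyond the tabulated range, since it is conceivable that log-concavity holds while real-rootedness eventually fails. Should that happen, the fallback is to prove log-concavity directly by a simultaneous induction on the coupled system for $a(n,k)$ and $b(n,k)$, carrying auxiliary cross-inequalities between the two triangular arrays; alternatively, one could seek an explicit injection from pairs of permutations counted by $a(n,k-1)a(n,k+1)$ into those counted by $a(n,k)^2$, realized concretely on the width-$3$ twisted-cylinder polyominoes supplied by Theorem~\ref{T:bijection_2}.
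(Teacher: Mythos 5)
First, note that this statement is a \emph{conjecture} in the paper: the authors offer no proof of it, so your proposal can only be judged on its own terms. Your opening reduction is correct: clearing the denominator in Theorem~\ref{T:mgf} does give $A_n(y)=(1+y)A_{n-1}(y)+yA_{n-2}(y)+2y^2A_{n-3}(y)$ for $n\ge 4$, with the initial polynomials you list. But the primary strategy—real-rootedness plus Newton's inequalities—is dead on arrival, and the counterexample is a polynomial you yourself wrote down: $A_5(y)=y^4+12y^3+20y^2+8y+1$ is \emph{not} real-rooted. Indeed $A_5'(y)=4(y+1)(y^2+8y+2)$, so the critical points are $-4-\sqrt{14}$, $-1$, and $y_0=-4+\sqrt{14}\approx-0.2583$. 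Using $y_0^2=-8y_0-2$ one computes $A_5(y_0)=112y_0+29=112\sqrt{14}-419>0$ (since $112^2\cdot 14=175616>175561=419^2$), and $A_5(-1)=2>0$. Thus the local maximum at $-1$ and the rightmost local minimum at $y_0$ are both positive, so $A_5$ has exactly two real roots (both below $-1$) and a complex-conjugate pair. The row is nevertheless log-concave ($8^2\ge 20$, $20^2\ge 96$, $12^2\ge 20$): this is exactly the contingency you relegated to a numerical ``hedge,'' but it occurs already at $n=5$, inside the data you tabulated, so the entire interlacing program (and the search for an invariant that restores it) cannot be salvaged—there is nothing to interlace.

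That leaves only your fallback plans, and these are sketches rather than arguments: the simultaneous induction on the coupled arrays $a(n,k)$ and $b(n,k)$ would require you to identify concrete cross-inequalities that close under the recurrences of Proposition~\ref{P:recurrence} (note that the term $2\sum_{i=1}^{n-1}a(i,k-1)$ there sums over all lengths $i$, which makes a naive log-concavity induction on $n$ awkward; your three-term polynomial recurrence is the cleaner vehicle, but you propose no inequality for it that survives the failure of real-rootedness), and the injection on width-3 twisted-cylinder polyominoes is not begun. As written, the proposal establishes the recurrence for $A_n(y)$ and nothing more; the conjecture remains open, consistent with the paper leaving it as such.
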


\begin{conjecture}  The sequence $\{a(2n+1,n)\}$ is enumerated by 
\[
\sum_{i=0}^{n-1}(-1)^i \binom{2n-2i}{n-i}\binom{n-1}{i} 2^{n-i} \quad \text{ for }n \geq 1
\]
and has generating function
\[
\sqrt{\frac{1+x}{1-7x}}.
\]
This formula and generating function appear in OEIS~\cite{OEIS} for sequence A085458.
\end{conjecture}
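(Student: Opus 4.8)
The plan is to recognize $a(2n+1,n)$ as a diagonal coefficient of the bivariate generating function $A(x,y)$ obtained in Theorem~\ref{T:mgf} and to extract it by the standard diagonal-extraction method via residues. Since $a(2n+1,n)=[x^{2n+1}y^n]A(x,y)$, the substitution $y=w/x^2$ turns each monomial $a(n,k)x^ny^k$ into $a(n,k)x^{n-2k}w^k$, so exactly the terms with $n-2k=1$ survive in the coefficient of $x^1$. Concretely, I would establish
\[
\sum_{n\ge 1}a(2n+1,n)\,w^n=\frac{1}{2\pi i}\oint A\!\left(x,\tfrac{w}{x^2}\right)\frac{dx}{x^2},
\]
where the contour is a small circle about $x=0$, and then evaluate the integral by summing residues.

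Performing the substitution and clearing denominators presents $A(x,w/x^2)$ as a rational function of $x$,
\[
A\!\left(x,\tfrac{w}{x^2}\right)=\frac{x-w-xw+x^2w-2w^2}{-x^2+(1-w)x-w(1+2w)},
\]
whose denominator is a quadratic in $x$ with discriminant $(1-w)^2-4w(1+2w)=1-6w-7w^2=(1+w)(1-7w)$. This factorization is the crucial observation, since it is precisely what forces the square root in the claimed generating function. The two roots are $x_{\pm}=\tfrac12\big((1-w)\pm\sqrt{(1+w)(1-7w)}\big)$; as $w\to 0$ we have $x_-\to 0$ and $x_+\to 1$, so $x_-$, together with the pole at $x=0$, lies inside the contour while $x_+$ lies outside.

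To sidestep the order-two pole at $x=0$, I would invoke the residue theorem on the sphere: the integrand behaves like $x^{-2}$ at infinity, so its residue at infinity vanishes and the sum of the interior residues equals $-\operatorname{Res}_{x=x_+}$. At the simple pole $x_+$ the derivative of the denominator is $-2x_++(1-w)=-\sqrt{(1+w)(1-7w)}$, and using the defining relation $x_+^2=(1-w)x_+-w(1+2w)$ one finds that the numerator collapses to $(1+w)x_+^2$. Substituting yields
\[
\sum_{n\ge 1}a(2n+1,n)\,w^n=\frac{(1+w)x_+^2}{x_+^2\sqrt{(1+w)(1-7w)}}=\frac{1+w}{\sqrt{(1+w)(1-7w)}}=\sqrt{\frac{1+w}{1-7w}},
\]
which is the asserted generating function. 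The explicit alternating-sum formula then follows because it is the known coefficient expansion of this algebraic function recorded for OEIS sequence A085458~\cite{OEIS}; it can also be checked independently by expanding $\tfrac{1+w}{\sqrt{1-6w-7w^2}}$ through the central-binomial series for $(1-4z)^{-1/2}$, or by a routine Zeilberger verification that both sides satisfy the same recurrence.

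The main obstacle is the residue bookkeeping rather than any deep idea: one must correctly identify which poles are interior (as formal power series in $w$) and handle the double pole at the origin, which the residue-at-infinity argument disposes of cleanly. The one genuinely clever step is the simplification of the numerator at $x_+$ to $(1+w)x_+^2$, which cancels all dependence on $x_+$ and leaves exactly the square root; without exploiting the quadratic satisfied by $x_+$ this evaluation would look intractable.
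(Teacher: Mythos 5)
You should know at the outset that the paper contains no proof of this statement: it is explicitly offered as a conjecture, supported only by data, so there is nothing to compare your argument against --- if your argument is sound, it settles the conjecture rather than re-proves it. I checked your core computations and they are correct. Substituting $y=w/x^2$ into the generating function of Theorem~\ref{T:mgf} and multiplying numerator and denominator by $x$ gives exactly your rational function; the discriminant of the denominator is $(1-w)^2-4w(1+2w)=1-6w-7w^2=(1+w)(1-7w)$; writing the numerator as $N(x)=wx^2+\left[(1-w)x-w(1+2w)\right]$ and using that the bracket equals $x_+^2$ at a root shows immediately that $N(x_+)=(1+w)x_+^2$ (your ``clever step'' is in fact a one-line observation); the integrand is $O(x^{-2})$ at infinity so the residue at infinity vanishes; and $D'(x_+)=-\sqrt{(1+w)(1-7w)}$, so minus the residue at $x_+$ is $\frac{1+w}{\sqrt{(1+w)(1-7w)}}=\sqrt{\frac{1+w}{1-7w}}$ as you claim.

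Two defects need repair. First, an off-by-one: the coefficient $[x^1]A(x,w/x^2)$ picks up the $k=0$ term $a(1,0)=1$, so your contour integral equals $\sum_{n\ge 0}a(2n+1,n)w^n$; as written, with $\sum_{n\ge 1}$, your displayed identity is false by the constant $1$, since the right-hand side has constant term $1$. Second, the closed-form binomial sum is the other half of the conjecture, and citing OEIS is not a proof. That gap is easy to close: setting $m=n-i$ and using $\sum_{i\ge 0}\binom{m+i-1}{i}(-x)^i=(1+x)^{-m}$ together with $(1-4z)^{-1/2}=\sum_{m\ge 0}\binom{2m}{m}z^m$ at $z=2x/(1+x)$,
\[
\sum_{n\ge 1}\sum_{i=0}^{n-1}(-1)^i\binom{2n-2i}{n-i}\binom{n-1}{i}2^{n-i}x^n
=\sum_{m\ge 1}\binom{2m}{m}\left(\frac{2x}{1+x}\right)^m
=\frac{1}{\sqrt{1-\frac{8x}{1+x}}}-1
=\sqrt{\frac{1+x}{1-7x}}-1,
\]
which matches the nonconstant coefficients of your generating function. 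Finally, for a publishable argument you should add the standard sentence justifying the diagonal-extraction integral: since $a(n,k)$ grows at most exponentially, for fixed small $|w|$ the Laurent series $\sum_{n,k}a(n,k)x^{n-2k}w^k$ converges absolutely on a circle $|x|=\rho$ chosen with $|x_-|<\rho<|x_+|$, so term-by-term integration and the residue computation are legitimate. With these repairs your argument is a complete proof of the conjecture.
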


\begin{conjecture}
The set of two-pop-stack sortable permutations of length $2n+1$ with exactly $n$ ascents has an equal number of permutations with last block of size one as permutations with last block size greater than one.  That is, $a(2n+1,n)=2b(2n+1,n)$ for all $n$.
\end{conjecture}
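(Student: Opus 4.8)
Throughout write $N$ for the parameter called $n$ in the statement, so that we study permutations of length $2N+1$ with $N$ ascents; note the identity really holds only for $N\ge 1$, since $a(1,0)=1\neq 2=2b(1,0)$. The plan is to pass to generating functions and collapse the claim to the vanishing of a single diagonal. First I would solve the linear system for $A(x,y)$ and $B(x,y)$ displayed in the proof of Theorem~\ref{T:mgf}; alongside the given $A$ one finds $B(x,y)=\dfrac{x-x^{2}}{1-x-xy-x^{2}y-2x^{3}y^{2}}$, whence
\[
A(x,y)-2B(x,y)=\frac{1-2x+2x^{2}-xy-x^{2}y+x^{3}y-2x^{3}y^{2}}{1-x-xy-x^{2}y-2x^{3}y^{2}}.
\]
Because $a(2N+1,N)=2b(2N+1,N)$ is exactly the assertion $[x^{2N+1}y^{N}](A-2B)=0$, the whole problem is to show that this one family of diagonal coefficients vanishes for $N\ge1$.

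To extract that diagonal cleanly I would reparametrize using the block structure. By Lemma~\ref{L:blocks} a two-pop-stack sortable permutation is determined by its sequence of block lengths together with, for each adjacent pair, the choice $\max(B_i)<\min(B_{i+1})$ or $\max(B_i)=\min(B_{i+1})+1$, the latter available only when one of the two blocks has size at least $2$. For length $2N+1$ with $N$ ascents there are $N+1$ blocks, so writing $c_i=|B_i|-1\ge 0$ we have $\sum_{i=1}^{N+1}c_i=N$, and the number of permutations with a fixed excess vector $\mathbf c$ is $2^{g(\mathbf c)}$ with $g(\mathbf c)=\#\{i:c_i+c_{i+1}\ge 1\}$. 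The last block has size $1$ precisely when $c_{N+1}=0$, so the claim is the weighted identity $\sum_{c_{N+1}\ge 1}2^{g(\mathbf c)}=\sum_{c_{N+1}=0}2^{g(\mathbf c)}$. Marking the number of blocks by $z$ and the total excess by $t$, and splitting compositions by whether the last part is zero or positive, a one-step recursion on the number of blocks yields, after solving,
\[
\mathcal U=\frac{z(1-t)}{(1-z)-t(1+z+2z^{2})},\qquad \mathcal V=\frac{tz(1+z)}{(1-z)-t(1+z+2z^{2})},
\]
where $\mathcal U$ (resp.\ $\mathcal V$) collects the last-part-zero (resp.\ positive) compositions; these agree with $A,B$ under $t=x,\ z=xy$, and the target becomes $[t^{N}z^{N+1}](\mathcal U-\mathcal V)=0$.

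Expanding the common denominator as a geometric series in $t$ and reading off the coefficient of $t^{N}$, then of $z^{N+1}$, turns this into a genuinely one-variable statement: with $P=3z^{2}+2z-1$ and $Q=2z^{2}+z+1$,
\[
[z^{\,N}]\,\frac{P\,Q^{\,N-1}}{(1-z)^{\,N+1}}=0\qquad(N\ge1).
\]
The crucial algebraic fact is the factorization $Q-P=(1-z)(z+2)$, which lets me write the expression as $[z^{N}]\widehat H(z)\,\phi(z)^{N}$ with $\phi=Q/(1-z)$ and $\widehat H=P/\bigl(Q(1-z)\bigr)$. The Lagrange--Bürmann formula evaluates the full generating function in $N$: if $w=w(s)$ solves $w=s\,\phi(w)$, then $\sum_{N\ge0}\bigl([z^{N}]\widehat H\phi^{N}\bigr)s^{N}=\widehat H(w)/\bigl(1-s\phi'(w)\bigr)$. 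Using $s=w(1-w)/Q(w)$ and $\phi'(z)=2(1+2z-z^{2})/(1-z)^{2}$, the numerator reduces to the polynomial identity $P(w)=2w(1+2w-w^{2})-(1-w)Q(w)$, which is immediate; this gives $\widehat H(w)=-(1-s\phi'(w))$, so the generating function collapses to the constant $-1$. Hence the diagonal coefficient equals $-1$ at $N=0$ and $0$ for every $N\ge1$, which is exactly the claim.

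The main obstacle is the diagonal extraction itself: a naive coefficient comparison fails because the substitution $y=s/x^{2}$ sends the denominator to something vanishing at $s=0$, so the diagonal must be handled either by the residue form of the diagonal method — picking up the residues at $x=0$ together with the unique root of $z(1-z)-s(1+z+2z^{2})$ tending to $0$ — or, as above, by routing through Lagrange inversion, where all the difficulty compresses into the one-line identity $Q-P=(1-z)(z+2)$. I expect the Lagrange route to be cleanest to write up. As a consistency check and source of intuition, I would also record that $\mathcal P_{2}$ is closed under reverse-complement: the eight forbidden patterns are stable under it, with $2341\leftrightarrow4123$, $3421\leftrightarrow4312$, $4\overline{1}352\leftrightarrow413\overline{5}2$, and $3412,4231$ fixed. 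On length $2N+1$ with $N$ ascents this map preserves the ascent count (ascents and descents are then equinumerous) and interchanges the first and last block sizes, so it makes the joint distribution of first- and last-block size symmetric — weaker than, but consistent with, the exact half-and-half split proved above. It is worth noting that the radicand $(1-7s)(1+s)$ governing the relevant root is precisely the one in $\sqrt{(1+x)/(1-7x)}$ predicted for a neighboring diagonal in the preceding conjecture.
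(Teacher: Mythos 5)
The first thing to say is that the paper contains no proof of this statement: it is the last of the three open conjectures in Section~\ref{S:more_rec}, so there is no argument of the authors' to compare yours against --- a correct proof here does not take a ``different route from the paper,'' it resolves a question the paper leaves open. I have checked your argument and it is correct. The solved $B(x,y)=(x-x^2)/(1-x-xy-x^2y-2x^3y^2)$ is right; your composition generating functions $\mathcal{U},\mathcal{V}$ do come out as stated from the last-part recursion; extracting the coefficient of $t^N$ from $\mathcal{U}-\mathcal{V}$ gives $zP(z)Q(z)^{N-1}/(1-z)^{N+1}$ with $Q=1+z+2z^2$ and $P=Q-(2+z)(1-z)=3z^2+2z-1$, so the conjecture is equivalent to $[z^N]PQ^{N-1}/(1-z)^{N+1}=0$ for $N\ge1$; the Lagrange--B\"urmann identity you quote, $\sum_{N\ge0}\bigl([z^N]H\phi^N\bigr)s^N=H(w)/\bigl(1-s\phi'(w)\bigr)$ with $w=s\phi(w)$, is the standard second form of Lagrange inversion and applies since $\phi(0)=1\ne0$; and the closing polynomial identity $P(w)=2w(1+2w-w^2)-(1-w)Q(w)$ checks out, forcing $\widehat H(w)=-(1-s\phi'(w))$ and hence the constant series $-1$. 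I confirmed the vanishing directly for $N=1,2,3$ and against the counts $a(3,1)=4=2b(3,1)$ and $a(5,2)=20=2b(5,2)$; your observation that $n=0$ is a genuine exception ($a(1,0)=1\ne 2=2b(1,0)$) is also correct, so the conjecture must be read with $n\ge1$, which is what you prove.

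Two points of precision for a write-up. The phrase ``these agree with $A,B$ under $t=x,\ z=xy$'' is not literally true; the correct identities are $\mathcal{U}(x,xy)=yB(x,y)$ and $\mathcal{U}(x,xy)+\mathcal{V}(x,xy)=y\bigl(A(x,y)-1\bigr)$, and since the substitution sends distinct monomials $t^az^b$ to distinct monomials $x^{a+b}y^b$, these do yield your reformulation $[t^Nz^{N+1}](\mathcal{U}-\mathcal{V})=0$. Relatedly, the claim that a sortable permutation is \emph{determined} by its block lengths together with the binary interaction choices (giving the weight $2^{g(\mathbf{c})}$) needs more than Lemma~\ref{L:blocks}; existence and uniqueness of the permutation realizing each choice vector is essentially the assertion buried in the proof of Proposition~\ref{P:recurrence}. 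Fortunately your proof does not rest on it: once $\mathcal{U}$ and $\mathcal{V}$ are verified against the paper's $A$ and the derived $B$ via the substitution identities, the block-structure discussion is motivation and Theorem~\ref{T:mgf} carries all the logical weight. The reverse-complement observation is a correct symmetry of $\mathcal{P}_2$ (the eight patterns are permuted exactly as you say) but, as you note, it gives only the weaker joint symmetry of first and last block sizes, so it rightly stays a remark.
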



\begin{thebibliography}{00}
\bibitem{AAB13} G. Aleksandrowicz, A. Asinowski, and G. Barequet, Permutations with forbidden patterns and polyominoes on a twisted cylinder of width 3. \emph{Discrete Math.} \textbf{313} (2013), 1078--1086.
\bibitem{AAB14} G. Aleksandrowicz, A. Asinowski, G. Barequet and R. Barequet, Formulae for polyominoes on twisted cylinders. In Proceedings of the Eighth International Conference on Language and Automata Theory and Applications, Lecture Notes in Computer Science 8370 (Madrid, Spain, Mar. 10-14). Springer-Verlag, Heidelberg, New York, Dordrecht, London, 2014, 76--87.
\bibitem{AS99} M. D. Atkinson and J.-R. Sack, Pop-stacks in parallel. \emph{Inform. Process. Lett.} \textbf{70} (1999), 63--67.
\bibitem{AS02} M. D. Atkinson and T. Stitt, Restricted permutations and the wreath product. \emph{Discrete Math.} \textbf{259}, 1-3 (2002), 19--36.
\bibitem{AN81} D. Avis and M. Newborn, On pop-stacks in series. \emph{Utilitas Math.} \textbf{19} (1981), 129--140.
\bibitem{CG17} A. Claesson and B. Gu$\eth$mundsson, Enumerating permutations sortable by
$k$ passes through a pop-stack. \emph{S\'{e}m. Lothar. Combin.} 80B.43 (2018), 12 pp.
\bibitem{EI71} S. Even and A. Itai, Queues, stacks, and graphs. In Theory of machines and computations
(Proc. Internat. Sympos., Technion, Haifa, 1971). Academic Press, New York, 1971, 71--86.
\bibitem{K73} D. E. Knuth, The art of computer programming. Volume 3. Addison-Wesley Publishing
Co., Reading, Mass.-London-Don Mills, Ont., 1973. Sorting and searching, Addison-Wesley Series in Computer Science and Information Processing.
\bibitem{OEIS} N. Sloane, \newblock The On-Line Encyclopedia of Integer Sequences,
Available at \url{http://oeis.org}, 2018.
\bibitem{P73} V. R. Pratt, Computing permutations with double-ended queues, parallel stacks and parallel queues. In STOC-73: Proceedings of the fifth annual ACM symposium on Theory of computing (New York, NY, USA, 1973), ACM Press, 268--277.
\bibitem{SV09} R. Smith and V. Vatter, The enumeration of permutations sortable by pop stacks in parallel. \emph{Inform. Process. Lett.} \textbf{109} (2009), 626--629.
\bibitem{SV14} R. Smith and V. Vatter, A stack and pop stack in series. \emph{Australas. J. Combin.} \textbf{58.1} (2014), 157--171.
\bibitem{T72} R. Tarjan, Sorting using networks of queues and stacks. J. Assoc. Comput. Mach. \textbf{19}
(1972), 341--346.
\bibitem{W90}  J. West, Permutations with forbidden subsequences and stack sortable permutations. Ph.D. thesis,
MIT, 1990.
\bibitem{Z92} D. Zeilberger, A proof of Julian West's conjecture that the number of two-stack sortable
permutations of length $n$ is $2(3n)!/((n + 1)!(2n + 1)!)$. \emph{Disc. Math.} \textbf{102} (1992), 85--93.
\end{thebibliography}
\end{document}